\title[Logarithmic mean oscillation on the polydisc and paraproducts]{Logarithmic mean oscillation on the polydisc,
multi-parameter paraproducts and iterated commutators}
\newtheorem{theorem}{T{\hskip 0pt\footnotesize\bf HEOREM}}[section]
\newtheorem{lemma}[theorem]{L{\hskip 0pt\footnotesize\bf EMMA}}
\newtheorem{proposition}[theorem]{P{\hskip 0pt\footnotesize\bf ROPOSITION}}
\newtheorem{definition}[theorem]{D{\hskip 0pt\footnotesize\bf EFINITION}}
\newtheorem{corollary}[theorem]{C{\hskip 0pt\footnotesize\bf OROLLARY}}
\newtheorem{remark}[theorem]{R{\hskip 0pt\footnotesize\bf EMARK}}
\def\eins{\mathbf{1}}
\def\O{\Omega}
\def\D{\mathcal D}
\def\RR{\mathcal R}
\def\J{\mathcal J}
\def\RR{\mathcal{R}}
\def\vda{\vec{\delta}}
\def\Da{\Delta}
\def\veps{\vec{\varepsilon}}
\def\vba{\vec{\beta}}
\def\vj{\vec{j}}
\def\vk{\vec{k}}
\newcommand{\supp}{\mathrm{supp}}
\def\BMO{\mathrm{BMO}}
\def\LMO{\mathrm{LMO}}
\def\bmo{\mathrm{bmo}}
\newcommand{\bprop} {\begin{proposition}}
\newcommand{\eprop} {\end{proposition}}
\newcommand{\btheo} {\begin{theorem}}
\newcommand{\etheo} {\end{theorem}}
\newcommand{\blem} {\begin{lemma}}
\newcommand{\elem} {\end{lemma}}
\newcommand{\bcor} {\begin{corollary}}
\newcommand{\ecor} {\end{corollary}}
\newcommand{\Be}{\begin{equation}}
\newcommand{\Ee}{\end{equation}}
\newcommand{\Bea}{\begin{eqnarray}}
\newcommand{\Eea}{\end{eqnarray}}
\newcommand{\Bes}{\begin{equation*}}
\newcommand{\Ees}{\end{equation*}}
\newcommand{\Beas}{\begin{eqnarray*}}
\newcommand{\Eeas}{\end{eqnarray*}}
\newcommand{\Ba}{\begin{array}}
\newcommand{\Ea}{\end{array}}
\def\R{\mathbb{R}}
\def\N{\mathbb{N}}
\def\T{\mathbb{T}}
\begin{document}

\author[Beno\^it F. Sehba]{Beno\^it F. Sehba}
\address{Beno\^it Florent Sehba, D\'epartement de Math\'ematiques, Facult\'e des Sciences, Universit\'e de Yaound\'e I, B. P. 812 Yaound\'e,Cameroun. }
\email{bsehba@gmail.com}
\keywords{Paraproduct, Haar basis, bounded mean oscillation, logarithmic mean oscillation, product domains}
\subjclass[2000]{Primary: 42B30, 42B37 , Secondary: 42B20 }


\begin{abstract}
We introduce another notion of bounded logarithmic mean oscillation in the $N$-torus and give an equivalent definition in terms of boundedness of  multi-parameter paraproducts from the dyadic little $\BMO$, $\bmo^d(\T^N)$ to the dyadic product $\BMO$ space, $\BMO^d(\T^N)$. We also obtain a sufficient condition for the boundedness of the iterated commutators from the subspace of $\bmo(\R^N)$ consisting of functions with support in $[0,1]^N$ to $\BMO(\R^N)$.

\end{abstract}
\maketitle
\section{Introduction}
Multi-parameter paraproducts are among the most studied operators in modern harmonic analysis as they appear as the building blocks of several other operators. Their study has been at the origin of several results in the literature and it is still attracting a lot of attention \cite{bp,ferglac, laceyjason, laceyterw, muscalu, pottsehba1}.
\vskip .2cm
Our first interest in this paper will be essentially for the boundedness of dyadic paraproducts from the dyadic little $\BMO(\T^N)$, $\bmo^d(\T^N)$ of Cotlar and Sadosky \cite{cotsad} to the dyadic product $\BMO(\T^N)$ of Chang and Fefferman \cite{ChFef2}, $\BMO^d(\T^N)$.
We will focus on the so-called main  paraproduct denoted below
by $\Pi$. We prove a characterization of boundedness of $\Pi$ from $\bmo^d(\T^N)$ to $\BMO^d(\T^N)$ in terms of a new notion of logarithmic mean oscillation in the polydisc. In the two-parameter case, this notion is in fact implicit in \cite{pottsehba1} and \cite{benoit}.
Some other notions of logarithmic mean oscillation were discussed in \cite{pottsehba1, pottsehba2, benoit}. The notion of logarithmic mean oscillation that we are considering in this paper is the natural one when the first space is the little space of functions of bounded mean oscillation, the target space being the product $\BMO$.

Our second interest is for the boundedness of the iterated commutators with the Hilbert transforms from $\bmo([0,1]^N)$ to $\BMO(\R^N)$. We prove that a sufficient condition for these commutators to be bounded is given by our notion of logarithmic oscillation adapted to $\R^N$. This last interest is in the scope of the works \cite{ferglac, fergsad, pottsehba1} which can be seen as a motivation for this paper.

Our presentation is close to the one of \cite{pottsehba1}. In the next section, we provide various definitions and notations, and we give the statement of the result for the main paraproduct. Section 3 is devoted to the proof of the boundedness of the main paraproduct from $\bmo^d(\T^N)$ to $\BMO^d(\T^N)$. In section 4, we deal with the other paraproducts. The study of iterated commutators is in section 5 where for simplicity of the presentation, we will mainly discuss the two-parameter case as the $N$-parameter case follows the same steps.

\vspace{0.5cm}

\section{preliminaries and main result}
 As usual, $\mathcal D$ will be
the set of all dyadic intervals of the unit circle $\T$ that we identify with the interval $[0,1)$. The set of all dyadic rectangles
 $R=R_1\times\cdots\times R_N$, where $R_j\in \mathcal D$,
$j=1,\cdots, N$ is denoted $\mathcal R=\mathcal {D}^N$. The Haar wavelet adapted
to the dyadic interval $I$ is given by
$$
h_I=|I|^{-1/2}(\chi_{I^+}-\chi_{I^-}),
$$
where $I^+$ and $I^-$
are the right and left halves of $I$, respectively.

 The product Haar wavelet $h_R$ adapted to the rectangle $R=R_1\times\cdots\times R_N\in \RR$ is defined by $h_R(t_1,\cdots, t_N)=h_{R_1}(t_1)\cdots h_{R_N}(t_N)$.
We denote by $L_0^2(\T^N)$ the subspace of $L^2(\T^N)$ defined as follows
$$
L_0^2(\T^N) = \left\{ f \in L^2(\T^N):  \int_\T f(\cdots,t_j,\cdots) dt_j =0, j=1,\cdots, N \text{ for a.e. }t_1,\cdots, t_N \in \T \right\},
$$
so that
$$f=\sum_{R\in \mathcal {R}} \langle f,h_R \rangle h_R \qquad (f \in L^2_0(\T^N)).$$
The Haar coefficient $\langle f, h_R \rangle$ will be quite often denoted $f_R$ or $f_{S\times T}$ whenever $R=S\times T$. The mean of $f\in
L^2(\mathbb {T}^N)$ over the dyadic rectangle $R$ is denoted $m_Rf$.

\vskip .2cm
The dyadic product Hardy space $H^{1}_d(\mathbb T^N)$ is defined by

$$
H_d^{1}(\mathbb T^N)=\{f\in L_0^1(\mathbb T^N): \mathcal {S}[f]\in
L^1(\mathbb T^N)\},
$$
where $\mathcal S$ is the dyadic square function,
\begin{equation}   \label{eq:sq}
     \mathcal {S}[f] = \left(\sum_{R \in \RR}  \frac{\chi_R}{|R|} |f_R|^2 \right)^{1/2}.
\end{equation}

The dual space of the dyadic product Hardy space $H^{1}_d(\mathbb T^N)$ is the space of functions
 of dyadic bounded mean oscillations in $\mathbb T^N$,
$\BMO^d(\mathbb T^N)$ (see e.g.
\cite{bp, ChFef1, treil}) and it consists of all functions $f\in L_0^2(\mathbb
T^N)$ such that
\begin{equation}
 ||f||_{\BMO^d(\T^N)}^2:=\sup_{\O\subset \mathbb
{T}^N}\frac{1}{|\O|}\sum_{R\subset \O}|f_R|^2=\sup_{\O\subset \mathbb
{T}^N}\frac{1}{|\O|}||P_\O f||_{L^2(\T^N)}^2 < \infty,
\end{equation}
where the supremum
is taken over all open sets $\O\subset \mathbb T^N$ and $P_\O$ is the
orthogonal projection on the subspace spanned by Haar functions
$h_R$, $R\in \mathcal R$ and $R\subset  \O$.

\vskip 2.cm

The little $\BMO$ of Cotlar and Sadosky is defined by
\begin{equation} \label{bmo1}
   \bmo(\T^N)= \{ f \in L^2(\T^N): ||f||_{*,N} < \infty \},
\end{equation}
where
$$||f||_{*,N}:=\sup_{R \subset \T^N, \text{ rectangle }} \frac{1}{|R|}\int_R |f(s,t) - m_R f| ds dt$$
with $m_Rf=\frac{1}{|R|}\int_R f(t_1,\cdots, t_N) dt_1\cdots dt_N$.

The dyadic $\bmo(\T^N)$ denoted $\bmo^d(\T^N)$ is defined as above by taking the supremum only over dyadic rectangles.
\vskip .2cm
Let us introduce some further notions in the dyadic setting. For $\vj =(j_1,\cdots, j_N) \in \N_0 \times\cdots\times \N_0=\N_0^N$ we define the $j_1$-th generation of dyadic intervals and the $\vj$-th generation of dyadic rectangles as follows.
$$
     \D_{j_1} = \{I \in \D: |I|= 2^{-j_1} \},
$$
$$
    \RR_{\vj} = \D_{j_1} \times\cdots\times \D_{j_N} = \{ I_1 \times\cdots\times I_N \in \RR: |I_k|= 2^{-j_k}\}.
$$
We will be also using the following notations
$$
\D^K=\D\times\cdots\times \D\,\,\, (\textrm{K-factors}),\,\,\,K\in \N_0,
$$
for $\vj\in \N_0^K$,
$$
\D_{\vj}^K=\D_{j_1} \times\cdots\times \D_{j_K}.
$$
%
The product Haar martingale differences are given by
$$
   \Da_{\vj} f = \sum_{R\in \RR_{\vj}} \langle  f, h_R \rangle h_R,
$$
and the expectations are defined by
$$
   E_{\vj} f = \sum_{\vk \in \N_0^N ,  \vk < \vj } \Da_{\vk}f,
$$
where we write $ \vk < \vj $ for $k_l < j_l$, $l=1,\cdots,N$ and correspondingly $\vk \le \vj $ for $k_l \le j_l$,
for $f \in L^2(\T^N)$.

If we care about the variable on which we are acting, then we need the following operators
$$
   E^{(l)}_{\vj} f = \sum_{\vk  \in \N_0 \times \N_0,  k_l <j_l} \Da_{\vk}f,
$$
for $f \in L^2(\T^N)$.

The following operators defined on $L^2(\T^N)$ will be also needed
\begin{equation}   \label{eq:qdef}
   Q_{\vj}f =\sum_{ \vk \ge \vj } \Da_{\vk}f.
\end{equation}
The operators $Q^{(l)}_{\vj}$ are defined as for $E^{(l)}_{\vj}$.

\vskip 2.cm

For $I$ a dyadic interval and $\varepsilon\in \{0,1\}$, we define $h^{\varepsilon}_I$ by
$$h^{\varepsilon}_I =\left\{ \begin{matrix} h_I &\text{if }& \varepsilon=0\\
      |I|^{-1/2}|h_I| & \text{ if } & \varepsilon=1.
                                  \end{matrix} \right.
$$

For $R=R_1\times\cdots \times R_N\in \RR$ and $\vec {\varepsilon}=(\varepsilon_1,\cdots,\varepsilon_N)$, with $\varepsilon_j\in \{0,1\}$, we write
$$h^{\vec {\varepsilon}}_R(t)=h^{\varepsilon_1}_{R_1}(t_1)\cdots h^{\varepsilon_N}_{R_N}(t_N),\,\,\,t=(t_1,\cdots,t_N).$$
Several operators appearing in Fourier analysis are related to the following family of operators
\begin{equation}\label{paraprodgene2}B_{\veps,\vda,\vba}(\phi,f):=\sum_{R\in \RR}\langle \phi,h^{\vec {\varepsilon}}_R\rangle \langle f,h^{\vec {\delta}}_R\rangle h^{\vec {\beta}}_R.\end{equation}

The paraproducts we are interested in here are of the above form. They correspond to the operators
$B_{\veps,\vda,\vba}(\phi, \cdot)$ with symbol $\phi$ corresponding to triples $(\veps, \vda, \vba)$ such that $\vec {\varepsilon}=(0,\cdots,0)$ and
$$
\delta_j=\left\{ \begin{matrix} 1 &\text{if}& \beta_j=0\\

                0 &\text{otherwise}.&
                              \end{matrix}\right.
$$
For simplicity, we  denote these paraproducts by $\Pi^{\vec {\beta}}$. We will be using the notations
$\vec{1}=(1,\cdots,1)$, $\vec{0} = (0,\cdots,0)$.

\vspace{0.5cm}

Let $\phi \in L^2(\T^N)$. The (main) paraproduct $\Pi_\phi$ is defined by
$$
   \Pi_\phi f = \Pi(\phi,f):= \sum_{\vj \in \N_0 \times\cdots\times \N_0}
(\Delta_{\vj} \phi) (E_{\vj} f) = \sum_{R\in \RR} h_R \phi_R m_R f
$$
on functions with finite Haar expansion; it is just the paraproduct $\Pi^{(0,\cdots,0)}$ above.

Next, we define the space of functions of dyadic logarithmic mean oscillation on $\T^N$, $\LMO^d(\T^N)$.
\begin{definition}   \label{def:LMOd}
Let $\phi \in L^2(\T^N)$. We say that $ \phi \in \LMO^d(\T^N)$, if there exists $C >0$ with
$$
    \|Q_{\vj} \phi\|_{\BMO^d(\T^N)} \le C \frac{1}{(\sum_{k=1}^Nj_k) +N}
$$
for all $\vj = (j_1,\cdots, j_N)  \in \N_0^N$. The infinimum of such constants is denoted by $\|\phi\|_{\LMO^d(\T^N)}$.
\end{definition}

We have the following alternative characterization of our space.
\begin{proposition} \label{prop:LMOequivchar}
Let $\phi \in L^2(\T^N)$. Then $ \phi \in \LMO^d(\T^N)$, if and only if there exists a constant $C >0$ such that for each
dyadic rectangle $R= I_1 \times\cdots\times I_N$ and each open set $\Omega \subseteq R$,
\begin{equation}   \label{eq:lmochar}
   \frac{\left(\log\frac{4}{|I_1|}+\cdots +\log\frac{4}{|I_N|}\right)^2}{|\Omega|} \sum_{Q \in \RR, Q \subseteq \Omega} |\phi_Q|^2 \le C.
\end{equation}
\end{proposition}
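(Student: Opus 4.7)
The proof is a direct bookkeeping argument: both conditions are quadratic sums of Haar coefficients over rectangles of a prescribed minimum aspect, and they differ only in how the localization to a ``base'' rectangle $R$ is organized. The plan is as follows.

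First I would observe the elementary comparison
\[
   \textstyle\sum_{l=1}^N \log\frac{4}{|I_l|} \;=\; \bigl(2N + \sum_{l=1}^N j_l\bigr)\log 2,
   \qquad |I_l|=2^{-j_l},
\]
so that $\sum_l \log(4/|I_l|)$ is comparable (up to a factor of $2\log 2$) to $(\sum_l j_l) + N$. Next I would unwind the $\BMO^d$ norm of $Q_{\vj}\phi$: since $Q_{\vj}\phi$ is the Haar sum over $\vk\ge\vj$,
\[
   \|Q_{\vj}\phi\|_{\BMO^d}^2 \;=\; \sup_{\O}\frac{1}{|\O|}
   \sum_{\substack{Q\subseteq \O\\ |Q_l|\le 2^{-j_l}\, \forall l}}|\phi_Q|^2,
\]
the supremum being over open sets (equivalently, unions of dyadic rectangles). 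This is what one compares with \eqref{eq:lmochar}.

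For the forward implication, suppose $\phi\in\LMO^d$ and let $R=I_1\times\cdots\times I_N$ with $|I_l|=2^{-j_l}$ be a dyadic rectangle and $\O\subseteq R$ open. Any dyadic $Q=Q_1\times\cdots\times Q_N\subseteq\O\subseteq R$ automatically satisfies $Q_l\subseteq I_l$, hence $|Q_l|\le 2^{-j_l}$, i.e.\ $Q\in\RR_{\vk}$ for some $\vk\ge\vj$. Therefore
\[
   \sum_{Q\subseteq\O}|\phi_Q|^2 \;\le\; |\O|\,\|Q_{\vj}\phi\|_{\BMO^d}^2
   \;\le\; \frac{C\,|\O|}{\bigl((\sum j_l)+N\bigr)^2}
   \;\lesssim\; \frac{C\,|\O|}{(\sum_l\log(4/|I_l|))^2},
\]
which is \eqref{eq:lmochar}.

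For the reverse implication, assume \eqref{eq:lmochar}. Fix $\vj\in\N_0^N$ and an open set $\O\subseteq\T^N$. Split $\O$ according to the unique rectangle of $\RR_{\vj}$ containing each point: $\O=\bigsqcup_{R\in\RR_{\vj}}\O_R$ with $\O_R:=\O\cap R$. Each $\O_R$ is an open subset of the dyadic rectangle $R$ (with $|R_l|=2^{-j_l}$). Every $Q$ entering the sum defining $\|Q_{\vj}\phi\|_{\BMO^d}^2$ satisfies $|Q_l|\le 2^{-j_l}$, so $Q$ lies inside a unique $R\in\RR_{\vj}$ and, since $Q\subseteq\O$, actually $Q\subseteq\O_R$. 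Applying \eqref{eq:lmochar} rectangle by rectangle and using $|R_l|=2^{-j_l}$,
\[
   \sum_{\substack{Q\subseteq\O\\ |Q_l|\le 2^{-j_l}}}|\phi_Q|^2
   \;=\; \sum_{R\in\RR_{\vj}} \sum_{Q\subseteq\O_R}|\phi_Q|^2
   \;\lesssim\; \frac{C}{((\sum j_l)+N)^2}\sum_{R\in\RR_{\vj}}|\O_R|
   \;=\; \frac{C\,|\O|}{((\sum j_l)+N)^2}.
\]
Dividing by $|\O|$, taking the supremum and then the square root gives $\|Q_{\vj}\phi\|_{\BMO^d}\lesssim 1/((\sum j_l)+N)$, i.e.\ $\phi\in\LMO^d$.

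I do not expect any serious obstacle: the argument is purely combinatorial. The only point requiring mild care is the meaning of ``open set'' in the $\BMO^d$ supremum, but since only unions of dyadic rectangles affect the sum of squared Haar coefficients one may reduce to that case, after which the intersection $\O\cap R$ is again a legitimate test set in \eqref{eq:lmochar}.
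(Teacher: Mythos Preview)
Your proposal is correct and follows essentially the same route as the paper's proof: for the forward direction you use $P_\Omega\phi=P_\Omega Q_{\vj}\phi$ when $\Omega\subseteq R\in\RR_{\vj}$ and then invoke the $\LMO^d$ bound on $\|Q_{\vj}\phi\|_{\BMO^d}$; for the converse you partition an arbitrary open $\Omega$ into the pieces $\Omega\cap R$, $R\in\RR_{\vj}$, apply \eqref{eq:lmochar} on each piece, and sum. This matches the paper's argument line by line, with only cosmetic differences in notation and in the explicitness of the comparison $\sum_l\log(4/|I_l|)\approx(\sum_l j_l)+N$.
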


\proof Let $\phi \in \LMO^d(\T^N)$ in the sense of Definition \ref{def:LMOd},  let $R= I_1 \times\cdots\times I_N$ be a dyadic rectangle with $|I_j| = 2^{-k_j}$, $j=1,\cdots,N$,
 and let $\Omega \subseteq R$ be open. Let $\vk = (k_1,\cdots, k_N)$. Then
 \begin{multline*}
      \sum_{Q \in \RR, Q \subseteq \Omega} |\phi_Q|^2 = \|P_\Omega \phi \|_{L^2(\T^N)}^2 = \|P_\Omega Q_{\vk} \phi \|_{L^2(\T^N)}^2 \le |\Omega| \| Q_{\vk} \phi \|^2_{\BMO^d(\T^N)}    \\
      \le
       |\Omega| \frac{1}{\left(k_1 +\cdots +k_N \right)^2}    \| \phi \|^2_{\LMO^d(\T^N)} \lesssim  \left(\log\frac{4}{|I_1|}+\cdots+\log\frac{4}{|I_N|}\right)^{-2} |\O|  \| \phi \|^2_{\LMO^d(\T^N)}.
 \end{multline*}

 Conversely, suppose that $\phi \in L^2(\T^N)$ and that (\ref{eq:lmochar}) holds. Let $\vk = (k_1,\cdots, k_N) \in \N_0^N$, and let $\Omega \subseteq \T^N$ open.
Then
\begin{multline*}
   \| P_\Omega Q_{\vk} \phi \|_{L^2(\T^N)}^2 = \sum_{R \in \RR_{\vk}}        \| P_{R \cap \Omega} Q_{\vk} \phi \|_{L^2(\T^N)}^2 \\
       \lesssim C \frac{1}{\left(k_1\cdots+k_N+N\right)^2}  \sum_{R \in \RR_{\vk}}  |R \cap \Omega|  = C \frac{1}{\left(k_1 +\cdots+k_N +N\right)^2} |\O|.
\end{multline*}
This holds for all $\Omega \subseteq \T^N$ open, hence $\|Q_{\vk} \phi \|_{\BMO^d(\T^N)} \lesssim \frac{1}{\left(k_1 +\cdots+k_N +N\right)}$.
\qed

We would like to observe an important equivalent definition of the above space, $\LMO^d(\T^N)$ . For this we introduce further definitions.
\begin{definition}
Let $\phi \in L^2(\T^N)$, $j\in \{1,\cdots,N\}$. We say that $ \phi \in \LMO_{j}^d(\T^N)$, if there exists $C >0$ with
$$
    \|Q^{(j)}_{i} \phi\|_{\BMO^d(\T^N)} \le C \frac{1}{i+1 }
$$
for all $i \in \N_0$.

The infimum of such constants is denoted by $\|\phi\|_{\LMO_{j}^d(\T^N)}$.
\end{definition}

It is not hard to see that
\begin{equation}\label{eq:LMOLMO_j}
\LMO^d(\T^N)=\cap_{j=1}^N\LMO_j^d(\T^N).
\end{equation}
One way to see this is by considering the following equivalent definition of $\LMO_j^d(\T^N)$.
\begin{proposition} \label{prop:LMOequivchar1}
Let $\phi \in L^2(\T^N)$ and $j=1,2,\cdots,N$. Then
$ \phi \in \LMO_j^d(\T^N)$, if and only if there exists $C >0$ such that for each
dyadic rectangle $R= I_1 \times\cdots \times I_N$ and each open set $\Omega \subseteq R$,
\begin{equation}   \label{eq:LMOequivchar1}
   \frac{\left(\log\frac{4}{|I_j|}\right)^2}{|\Omega|} \sum_{Q \in \RR, Q \subseteq \Omega} |\phi_Q|^2 \le C.
\end{equation}
\end{proposition}
\vskip .2cm
Here is one of our main results that gives an equivalent definition of $\LMO^d(\T^N)$ in terms of boundedness of the main paraproduct $\Pi$ from $\bmo^d(\T^N)$ to $\BMO^d(\T^N)$.

\begin{theorem}   \label{thm:main}
Let $\phi \in L^2(\T^N)$. Then $\phi \in \LMO^d(\T^N)$, if and only if $\Pi_\phi:\bmo^d(\T^N) \rightarrow \BMO^d(\T^N)$ is bounded. Moreover,
$$\|\Pi_\phi\|_{\bmo^d(\T^N) \to \BMO^d(\T^N)} \approx \|\phi\|_{\LMO^d(\T^N)}.$$
\end{theorem}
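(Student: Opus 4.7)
My plan is to prove the two implications of the equivalence separately. For the necessity direction ($\Pi_\phi : \bmo^d \to \BMO^d$ bounded $\Rightarrow \phi \in \LMO^d$), I would invoke the alternative characterization $\LMO^d = \bigcap_l \LMO_l^d$ recorded in (\ref{eq:LMOLMO_j}) together with Proposition~\ref{prop:LMOequivchar1}, and reduce the task to producing, for each coordinate $l\in\{1,\ldots,N\}$ and each $i\in\N_0$, the bound $\|Q^{(l)}_i \phi\|_{\BMO^d} \lesssim \|\Pi_\phi\|_{\bmo \to \BMO}/(i+1)$. To extract this I would feed $\Pi_\phi$ a unit-norm test function $f_i^{(l)}\in\bmo^d$ depending essentially on the single variable $x_l$ -- a dyadic analogue of $\log(4/|x_l-c|)$ truncated at depth $i$ -- normalised so that $m_R f_i^{(l)} \asymp i+1$ for every dyadic rectangle $R$ with $|R_l| \le 2^{-i}$ and negligible for $R$ with $|R_l| > 2^{-i}$. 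Then $\Pi_\phi f_i^{(l)}$ agrees up to a universal constant with $(i+1)\,Q^{(l)}_i\phi$ on the relevant Haar scales, and the hypothesised operator bound delivers the required decay.

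For the sufficiency direction, fix $\phi\in\LMO^d$ and $f\in\bmo^d$ with $\|f\|_\bmo \le 1$ and, WLOG, $m_{\T^N}f = 0$. By the Chang--Fefferman characterization of $\BMO^d$, the target is to show that for every open $\Omega\subseteq\T^N$,
$$
    \sum_{R \subset \Omega} |\phi_R|^2 |m_R f|^2 \ \lesssim \ |\Omega|\,\|\phi\|^2_{\LMO^d}.
$$
The first ingredient is the little-$\BMO$ telescoping estimate
$$
    |m_R f| \ \le\ C \sum_{l=1}^N \log\frac{4}{|I_l|},
$$
for $R = I_1\times\cdots\times I_N$, obtained by descending the dyadic ancestor chain from $\T^N$ to $R$ one coordinate at a time (each halving of the $l$-th coordinate moves the mean by $O(\|f\|_\bmo)$, and there are $\log(4/|I_l|)$ halvings in that coordinate). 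After applying $(a_1+\cdots+a_N)^2 \le N\sum_l a_l^2$, the problem reduces, for each fixed $l$, to bounding $\sum_{R\subset\Omega}|\phi_R|^2 \log^2(4/|R_l|)$ by a constant multiple of $|\Omega|$.

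The main obstacle is that a direct Abel summation using only the single-coordinate tail estimate $\|Q^{(l)}_i\phi\|_{\BMO^d}\lesssim 1/(i+1)$ leads to the divergent series $\sum_i (2i+1)/(i+1)^2$; to circumvent this one must genuinely exploit the joint multi-parameter $\LMO^d$ condition rather than its single-coordinate projections. My plan is to perform a stopping-time/Journ\'e-type decomposition of $\Omega$: for each $R\subset\Omega$ select a maximal dyadic parent $R^\star=R^\star(R)\subseteq\Omega$ -- maximal in the sense that the dyadic parent of $R^\star_l$ (in the coordinate under consideration) no longer fits inside $\Omega$ -- producing a family of reference rectangles $\{R^\star_j\}$ whose $l$-th shadows are essentially disjoint and whose total measure is $\lesssim |\Omega|$. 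Splitting $|m_R f|^2 \le 2|m_R f - m_{R^\star}f|^2 + 2|m_{R^\star}f|^2$, the reference contribution is controlled by $|m_{R^\star_j}f|\lesssim \sum_l \log(4/|R^\star_{j,l}|)$ combined with the Proposition~\ref{prop:LMOequivchar} bound $(\sum_l\log(4/|R^\star_{j,l}|))^2\sum_{R\subseteq R^\star_j}|\phi_R|^2 \lesssim |R^\star_j|$, the two logarithmic factors cancelling to leave $\lesssim |R^\star_j|$. The difference contribution reduces to a sum over sub-rectangles of $R^\star_j$ strictly smaller in the $l$-th coordinate, and is handled by the same characterization applied to the sheet $R^\star_j$ restricted to depth in $l$, where the joint LMO tail estimate now produces sufficient decay in the lattice of level vectors to render the sum convergent. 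Summing over the disjoint reference rectangles and then over $l$ yields $\|\Pi_\phi\|_{\bmo\to\BMO}\lesssim \|\phi\|_{\LMO^d}$, which together with the necessity direction gives the claimed norm equivalence.
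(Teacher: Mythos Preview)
Your necessity argument is essentially the same idea as the paper's, which tests with $b=\log_R:=\sum_{j}\log_{R_j}$ and reads off the Proposition~\ref{prop:LMOequivchar} characterisation directly. One small imprecision: you cannot have a test function with $\|f\|_{\bmo}\lesssim 1$ and $m_R f\asymp i+1$ for \emph{every} $R$ with $|R_l|\le 2^{-i}$ (the $\bmo$ seminorm vanishes on constants, so such an $f$ would have to be essentially a large constant). What is available, and sufficient, is a one-variable $\log_{I_l}$ with $m_Q\log_{I_l}=\log(4/|I_l|)$ for $Q_l\subseteq I_l$; this covers exactly the $Q\subseteq\Omega\subseteq R$ needed in Proposition~\ref{prop:LMOequivchar1}.

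The sufficiency argument, however, has a genuine gap. Once you replace $|m_Rf|$ by its worst-case bound and reduce to controlling $\sum_{R\subset\Omega}|\phi_R|^2(\sum_l\log(4/|R_l|))^2$, no stopping-time repair can succeed, because this quantity is \emph{not} controlled by $\|\phi\|_{\LMO^d}^2\,|\Omega|$. Already in one parameter: take $\Omega=\T$ and $|\phi_I|^2=c\,2^{-j}/j^3$ for every $I$ at generation $j$; then $\sum_{j'\ge j}\sum_{|I|=2^{-j'}}|\phi_I|^2\asymp 1/j^2$ uniformly over localisations, so $\phi\in\LMO^d$, yet $\sum_I|\phi_I|^2 j_I^2\asymp\sum_j 1/j=\infty$. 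Your splitting $|m_Rf|^2\le 2|m_Rf-m_{R^\star}f|^2+2|m_{R^\star}f|^2$ does not help: when $\Omega=\T^N$ the only reference rectangle is $R^\star=\T^N$, $m_{R^\star}f=0$, and the ``difference'' term is the original sum unchanged. More generally, bounding $|m_Rf-m_{R^\star}f|$ by the relative depth and Abel-summing against the $\LMO$ tail $|R^\star|/(j_{R^\star}+k)^2$ still produces $\sum_k k/(j_{R^\star}+k)^2$, which diverges. The crucial point you are discarding is that the oscillation of the \emph{specific} $f$ cannot be at worst-case size on all scales simultaneously; any successful argument must retain the interaction between the Haar coefficients of $\phi$ and those of $f$.

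The paper proceeds completely differently: it uses the identity $\|\Pi_\phi b\|_{\BMO^d}\approx\|\Pi(\Pi(\phi,b),\,\cdot\,)\|_{L^2\to L^2}$ and bounds the nested paraproduct via Cotlar's Lemma in a single parameter. The key technical input is Lemma~\ref{lemma:core2} (the $\sigma_k$-transference estimate $\|\Pi(\Pi(\phi,b),E_k^{(l)}\,\cdot\,)\|\lesssim(k+1)\|\phi\|_{\BMO^d}\|b\|_{\bmo^d}$), from which Lemma~\ref{lemma:core2bis} extracts the $\frac{k+1}{j+1}$ decay needed for the almost-orthogonality of the blocks $T_N=\Pi(\Pi(\phi,b),P_N\,\cdot\,)$. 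This mechanism keeps $b$ inside the estimates rather than passing to a pointwise worst case, which is exactly what your approach loses.
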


\section{The main paraproduct}
The aim of this section is to prove Theorem \ref{thm:main}.
We start by introducing some further notations.

Given an integrable function $f$ on $\T^N$ and rectangles $Q\subset \T^{N_1}$ and $S\subset \T^{N_2}$, $N=N_1+N_2$, we write $m_Qf=\frac{1}{|Q|}\int_Qf(s,t)ds$, $m_Sf=\frac{1}{|J|}\int_Jf(s,t)dt$, $s\in \T^{N_1},t\in \T^{N_2}$ and, $m_Rf=\frac{1}{|R|}\int_Rf(s,t)dsdt=\frac{1}{|R|}\int_Rf(t_1,\cdots, t_N)dt_1\cdots dt_N$, $t_j\in \T$, $j=1,\cdots,N$,
$R=Q\times S$.

We remark that if $Q$ is a rectangle in the $N_1$ first variables, then $m_Qf$ is in fact a function of the last $N-N_1$ variables. It is not hard to see that the space $\bmo(\T^N)$ has the following property.
\bprop\label{prop:meanofbmo}
Let  $f\in \bmo(\T^N)$. Then for any $R\in \T^K$, $N>K\in \N_0$, $m_Rf\in \bmo(\T^{N-K})$ uniformly. Moreover,
$$\|m_Rf\|_{\bmo(\T^{N-K})}\lesssim \|f\|_{\bmo(\T^{N})}.$$
%
\eprop

Let us also observe the following.
\blem\label{lem:testfunct}
The following assertions hold.
\begin{itemize}
\item[(1)] Given an interval $I$ in $\T$, there is a function in $\BMO(\T)$, denoted $\log_I$ such that
\begin{itemize}
\item the restriction of $\log_I$ to $I$ is $\log\frac{4}{|I|}$.
\item $\|\log_I\|_{\BMO(\T)}\le C$ where $C$ is a constant that does not depend on $I$.
\end{itemize}
\item[(2)] For any $f_j\in \BMO(\T)$, $j=1,\cdots,N$, the function $b(t_1,\cdots,t_N)=\sum_{j=1}^Nf_j(t_j)$ belongs to $\bmo(\T^N)$. Moreover,
$$\|\sum_{j=1}^Nf_j(t_j)\|_{\bmo(\T^N)}\le \sum_{j=1}^N\|f_j\|_{\BMO(\T)}.$$
\end{itemize}
\elem
\begin{proof}
We refer to \cite[Chapter 3]{benoit} for the proof of $(1)$. Assertion $(2)$
follows directly from the definition of $\bmo(\T^N)$.
\end{proof}


We will need the following.
\begin{lemma}  \label{lemma:avgrowth}  Let $b \in \bmo^d(\T^N)$,  $k\in \N_0$, $\vk = (k_1,\cdots, k_N) \in \N_0 \times\cdots\times \N_0$. Then
$$
  |m_{I\times R} b| \lesssim (k+1) \|b\|_{\bmo^d(\T^N)} \quad  (I \in \D_k, R\in \D^{N-1});
$$
$$
    \|\chi_{I\times R} b \|_{L^2(\T^N)}^2 \lesssim (k+1)^2 |I||R| \|b\|^2_{\bmo^d(\T^N)} \quad   (I \in \D_k, R\in \D^{N-1});
$$
\begin{equation}\label{eq:averg3}
    \|\chi_R P_T b \|_{L^2(\T^N)}^2 \lesssim |R| |T|  \|b\|^2_{\bmo^d(\T^N)} \quad   (R\times T \in \RR_{\vk}).
\end{equation}
\end{lemma}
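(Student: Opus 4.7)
My plan is to handle the three bounds in sequence: (1) by slicing and the classical one-dimensional BMO growth of averages, (2) by decomposing into oscillation plus mean and invoking (1), and (3) by recognizing a partial-projection identity that reduces matters to the $L^2$ little BMO inequality on the rectangle $R\times T$.

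For (1), I would fix $R \in \D^{N-1}$ and set $g(t_1) := m_R b(t_1,\cdot)$. The preceding slicing proposition gives $g \in \BMO^d(\T)$ with $\|g\|_{\BMO^d(\T)} \le \|b\|_{\bmo^d(\T^N)}$. Under the standing $L^2_0$ convention of this dyadic setup, $g$ has mean zero, so by the standard logarithmic growth of BMO averages along the chain of nested dyadic ancestors of $I \in \D_k$, one obtains $|m_I g| \lesssim (k+1)\|g\|_{\BMO^d(\T)}$. The identity $m_{I\times R}b = m_I g$ then gives (1). For (2), I would use the decomposition
\begin{equation*}
\int_{I\times R} |b|^2 \le 2\int_{I\times R}|b - m_{I\times R}b|^2 + 2|I||R|\,|m_{I\times R}b|^2,
\end{equation*}
bound the first summand by $|I||R|\|b\|_{\bmo^d}^2$ via the $L^2$ form of little BMO (equivalent to the $L^1$ form by John--Nirenberg), and the second by $|I||R|(k+1)^2\|b\|_{\bmo^d}^2$ using (1).

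For (3), the key observation is that the partial projection $P_T$, viewed as the projection onto Haar wavelets whose $y$-support is contained in $T$, satisfies on $R\times T$ the identity
\begin{equation*}
\chi_R P_T b \;=\; \chi_{R\times T}\bigl(b - m_{T,y}b(x,\cdot)\bigr),
\end{equation*}
where $m_{T,y}$ denotes the partial average in the $y$-coordinates (those belonging to $T$). A brief Pythagorean calculation (just Jensen's inequality) shows that the partial mean $m_{T,y}b(x,\cdot)$ is at least as close to $b$ in $L^2(R\times T)$ as the full mean $m_{R\times T} b$, which gives
\begin{equation*}
\|\chi_R P_T b\|_2^2 \;\le\; \int_{R\times T}|b - m_{R\times T} b|^2 \;\le\; |R||T|\,\|b\|_{\bmo^d}^2,
\end{equation*}
the last step being the $L^2$ little BMO inequality applied to the rectangle $R\times T$. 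The main obstacle is (3): one must correctly interpret $P_T$ as a \emph{partial} projection and verify the Pythagorean comparison of partial versus full averages. Parts (1) and (2) are then routine once the slicing and $L^1$-$L^2$ equivalence have been set up.
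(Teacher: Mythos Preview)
Your approach to (1) coincides with the paper's. For (2) and (3) you take a genuinely different and more economical route than the paper, which is worth recording.

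For (2), the paper expands $\chi_{I\times R}\,b$ via the identity
\[
\chi_{I\times R}\,b = P_{I\times R}b + \sum_{S\ \text{factor of } I\times R}\varepsilon_S\,\chi_{I\times R}\,m_S b
\]
and estimates each partial-mean term separately. Your two-term split $\int|b|^2\le 2\int|b-m_{I\times R}b|^2+2|I\times R|\,|m_{I\times R}b|^2$, combined with John--Nirenberg for little $\bmo$ to pass from the $L^1$ to the $L^2$ oscillation, is shorter and avoids the bookkeeping of subfactors. The paper's decomposition, on the other hand, does not invoke John--Nirenberg and yields the identity \eqref{eq:P_Rb} that it reuses in (3).

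For (3), the paper runs an induction on $N$, again through the decomposition \eqref{eq:P_Rb}. Your argument is more direct, but the displayed \emph{identity}
\[
\chi_R P_T b=\chi_{R\times T}\bigl(b-m_{T,y}b\bigr)
\]
is only correct when $T$ is a single dyadic interval. If $T\in\D^{M}$ with $M\ge 2$, the right-hand side still contains the ``partially constant'' pieces (Haar in some $y$-coordinates, constant in others) that $P_T$ removes. What remains true in all cases---and is all you need---is the \emph{inequality}: since $P_T$ is an orthogonal projection annihilating $\chi_T$ (and more), for every constant $c$ and each fixed $x$,
\[
\|(P_T b)(x,\cdot)\|_{L^2}^2=\|P_T\bigl(\chi_T(b(x,\cdot)-c)\bigr)\|_{L^2}^2\le\int_T|b(x,y)-c|^2\,dy.
\]
Taking $c=m_{R\times T}b$ and integrating over $x\in R$ gives your conclusion $\|\chi_R P_T b\|_2^2\le\int_{R\times T}|b-m_{R\times T}b|^2\le |R||T|\,\|b\|_{\bmo^d}^2$ without passing through the partial mean $m_{T,y}b$ at all. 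With this correction your argument for (3) is complete and cleaner than the paper's induction.
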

\begin{proof} Let $S = I \times R$, $|I|=2^{-k}$. The one parameter estimate of the mean of a $\BMO^d(\T)$-function and the properties of functions in $\bmo^d(\T^N)$ give directly, $$|m_Sb|=|m_I(m_Rb)|\lesssim (k+1)||m_Rb||_{\BMO^d(\T)}\lesssim (k+1)||b||_{\bmo^d(\T^N)},$$ giving the first inequality.


For the second inequality,  we observe that for $L\in \D^N$, the following identity holds:
\begin{equation}\label{eq:P_Rb}
\chi_Lb= P_L b+\sum_{S\,\,\,\textrm{a factor of}\,\,\, L}\varepsilon_S\chi_L(s,t) m_S b(t)
\end{equation}
where $\varepsilon_S=(-1)^K$, $K\in \N_0$, $S\in \D^K$. Here and below, we say $S$ is a factor of $L=L_1\times \cdots \times L_N$, if $S=L_{j_1}\times \cdots \times L_{j_M}$,
$j_l\in \{1,\cdots,N\}$, $M\le N$. When $M<N$, we say $S$ is a subfactor of $L$. Hence,


$$\|\chi_Lb\|_{L^2(\T^N)}\lesssim \|P_L b(s,t)\|_{L^2(\T^N)}+\sum_{S\,\,\,\textrm{a subfactor of}\,\,\, L}\|\chi_L(s,t) m_S b(t)\|_{L^2(\T^N)}+\|\chi_L(s,t) m_L b\|_{L^2(\T^N)}.$$
Thus, we only need to estimate the first and the last terms in the right hand side of the above inequality and the norm $\|\chi_{S\times Q}(s,t) m_S b(t)\|_{L^2(\T^N)}$, with $Q\times S=L=I\times R$.
\vskip .1cm
Clearly $\|P_L b \|_{L^2(\T^N)}^2 \le |L| \|b\|^2_{\BMO^d(\T^N)}\le |I||R| \|b\|^2_{\bmo^d(\T^N)}$ and
$$\|\chi_L m_Lb \|_{L^2(\T^N)}^2 = |m_L
b|^2 |L| \lesssim (k+1)^2|I| |R| \|b\|^2_{\bmo^d(\T^N)}$$ by the first inequality in Lemma
\ref{lemma:avgrowth}. To estimate $\|\chi_{S\times Q}(s,t) m_S b(t)\|_{L^2(\T^N)}$, we suppose that $S\in \D^{N-K}$ and $Q\in \D^K$. It follows that
\begin{eqnarray*}
\|\chi_{S\times Q}(s,t) m_S b(t)\|_{L^2(\T^N)} &=& |S|^{1/2} \| \chi_Q(t) m_S b(t) \|_{L^2(\T^K)} \\
&\lesssim& |S|^{1/2}  |Q|^{1/2}\left(  \| m_S b \|_{\bmo^d(\T^K)}+|m_{I\times R}b|\right) \\
&\lesssim& |I|^{1/2}|R|^{1/2} (k+1)  \| b \|_{\bmo^d(\T^N)} \\
\end{eqnarray*}
by the first inequality and the fact that $\| m_S b(t) \|_{\bmo^d(\T^K)}\lesssim \|b\|_{\bmo^d(\T^N)}$.

\vskip .2cm

For the last inequality, we use an induction argument on the number of parameters $N$. Starting from $N=2$, we first show that for $I,J\in \D$, we have
\begin{equation}\label{eq:dim2inneq}
\|\chi_I(s)P_Jb(s,t)\|_{L^2(\T^2)}^2\lesssim |I||J|\|b\|_{\bmo^d(\T^2)}^2.
\end{equation}

Using the decomposition (\ref{eq:P_Rb}) above, we find that
$$\|\chi_I(s)P_Jb(s,t)\|_{L^2(\T^2)}^2\lesssim \|P_{IJ}b\|_{L^2(\T^2)}^2+\|\chi_I(s)P_Jm_Ib(t)\|_{L^2(\T^2)}^2.$$
We have already seen that $\|P_{IJ}b\|_{L^2(\T^2)}^2\le |I||J|\|b\|_{\bmo^d(\T^2)}$, so we only have to take care of $\|\chi_I(s)P_Jm_Ib(t)\|_{L^2(\T^2)}^2$.

We clearly obtain
\begin{eqnarray*}
\|\chi_I(s)P_Jm_Ib(t)\|_{L^2(\T^2)}^2 = |I|\|P_Jm_Ib(t)\|_{L^2(\T)}^2\le |I||J|\|m_Ib\|_{\BMO^d(\T)}\le |I||J|\|b\|_{\bmo^d(\T^2)}.
\end{eqnarray*}

Let us now suppose that the inequality holds in $(N-1)$-parameter and prove that this also holds in $N$-parameter.

From the identity (\ref{eq:P_Rb}), we get
\Beas
\|\chi_R(s,t)P_Tb(s,t,u)\|_{L^2(\T^N)} &\lesssim& \|P_{R\times T} b\|_{L^2(\T^N)}+\sum_{S\,\,\,\textrm{a subfactor of}\,\,\, R}\|\chi_R(s,t) m_S P_Tb(t,u)\|_{L^2(\T^N)}\\ & & +\|\chi_R(s,t) m_R P_Tb(u)\|_{L^2(\T^N)}.
\Eeas
We first consider the term $\|\chi_R(s,t) m_S P_Tb(t,u)\|_{L^2(\T^N)}$, we suppose that $R=S\times Q\in \D^K$, $K=K_1+K_2<N$, and $S\in \D^{K_1}$. It follows from our hypothesis that
\begin{eqnarray*}
\|\chi_R(s,t) m_S (P_T b)(t,u)\|^2_{L^2(\T^N)} &=& |S|  \|\chi_Q(t) m_S(P_T b)(t,u) \|_{L^2(\T^{N-K_1})}^2 \\
&\lesssim& |S||Q|  |T|  \| m_S b \|_{\bmo^d(\T^{N-K_1})}^2 \\
&\lesssim& |R|  |T|  \| b \|_{\bmo^d(\T^N)}^2.
\end{eqnarray*}

Next we have
\begin{eqnarray*}
\|\chi_R(s,t) m_R P_Tb(u)\|_{L^2(\T^N)}^2 &=& |R|\|m_R P_Tb(u)\|_{L^2(\T^{N-K})}^2\\ &\le& |R||T|\|m_Rb\|_{\BMO^d(\T^{N-K})}^2\\ &\le& |R||T|\|m_Rb\|_{\bmo^d(\T^{N-K})}^2\\ &\le& |R||T|\|b\|_{\bmo^d(\T^{N})}^2.
\end{eqnarray*}

The proof is complete.

\end{proof}

\begin{remark}
From the first inequality in the above lemma, we obtain that
$$
  |m_{ R} b| \lesssim \left(k_1+\cdots+k_N+N\right) \|b\|_{\bmo^d(\T^N)} \quad  (R\in \mathcal {R}_{\vec {k}}, \vec {k}\in \mathbb {N}_0^N)
$$
and this is sharp. The sharpness is obtained by testing with the function $$\log_R(t_1,\cdots,t_N)=\log_{R_1}(t_1)+\cdots+\log_{R_N}(t_N)$$
where for any interval $I\in \D$, the function $\log_I$ is given in Lemma \ref{lem:testfunct}.
\end{remark}

Next, for $k,l \in \N_0 $ and $b\in L^2(\T^N)$, we consider on $L^2(\T^N)$ the operator
$\Pi_bE_{k}^{(l)} = \Pi(b, E_{k}^{(l)}\, \cdot)$   given by
$$\Pi_bE_{k}^{(l)}f=\Pi(b,E_{k}^{(l)}f), \,\,\,f\in L^2(\T^N).
$$
The next lemma is proved in \cite{pottsehba1}.
\begin{lemma} \label{lemma:core}
Let $b \in L^2(\T^N)$ and let $k,l \in \N_0$. Then
$$
   \|\Pi_b E^{(l)}_{k}\|_{L^2(\T^N) \to L^2(\T^N)} =   \| \Pi_{\sigma^{(l)}_{k} b}\|_{L^2(\T^N) \to L^2(\T^N)},
$$
where for $R=\prod_{j=1}^NR_j=R_l\times S\in \D^N$, $S\in \D^{N-1}$,
$$
   (\sigma^{(l)}_{k}b)_{R} =\left\{ \begin{matrix} b_{R} & \text{ if } & |R_l|
   > 2^{-k}\\
 (\sum_{R_l' \subseteq R_l} |b_{R_l'\times S}|^2)^{1/2}   & \text{ if } &
   |R_l| = 2^{-k}\\
      0 & \text{ otherwise. }&
                               \end{matrix} \right.
$$
\end{lemma}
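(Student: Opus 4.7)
The plan is to establish the stronger pointwise identity $\|\Pi_b E^{(l)}_{k} f\|_2 = \|\Pi_{\sigma^{(l)}_{k} b} f\|_2$ for every $f$ with finite Haar expansion, from which the equality of operator norms follows at once. Both sides are orthonormal Haar series, so by Parseval this reduces to comparing two sums of squared Haar coefficients term by term; the real content lies in a short identity for the averages $m_R(E^{(l)}_{k}f)$.

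First I would compute $m_R(E^{(l)}_{k}f)$ on a dyadic rectangle $R = R_l \times S$. Expanding $f$ in the Haar basis and using the elementary fact that $m_R h_Q = \prod_j \varepsilon_j |Q_j|^{-1/2}$ when $R_j \subsetneq Q_j$ in each coordinate and vanishes otherwise, combined with the constraint $|Q_l| > 2^{-k}$ built into $E^{(l)}_{k}$, one obtains
\[
m_R(E^{(l)}_{k} f) \;=\;
\begin{cases}
m_R f & \text{if } |R_l| \geq 2^{-k},\\[2pt]
m_{R^*} f & \text{if } |R_l| < 2^{-k},
\end{cases}
\]
where $R^* = R_l^* \times S$ and $R_l^*$ is the unique dyadic interval of length $2^{-k}$ containing $R_l$. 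Heuristically, $E^{(l)}_{k}$ replaces $f$ in the $l$-th variable by its average on intervals of length $2^{-k}$, so integrating against $R_l$ either recovers $m_R f$ (when $R_l$ is a union of such intervals) or returns the constant value $m_{R^*} f$ (when $R_l$ sits strictly inside one of them).

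Inserting this identity into the paraproduct, the $h_R$-coefficient of $\Pi_b E^{(l)}_{k} f$ equals $b_R m_R f$ for $|R_l|\geq 2^{-k}$ and $b_R m_{R^*} f$ for $|R_l|<2^{-k}$. Squaring and then grouping all rectangles at or below the critical scale according to their common parent $R^*$ at level $2^{-k}$, the factor $|m_{R^*} f|^2$ becomes shared, and the aggregated weight collapses to $|b_{R^*}|^2 + \sum_{R_l' \subsetneq R_l^*} |b_{R_l'\times S}|^2 = \sum_{R_l' \subseteq R_l^*} |b_{R_l'\times S}|^2 = |(\sigma^{(l)}_{k} b)_{R^*}|^2$, while the rectangles with $|R_l|>2^{-k}$ contribute $|b_R|^2|m_R f|^2$ on both sides. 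Hence the two squared norms agree term by term. The only genuine obstacle is the re-indexing bookkeeping: one must recognise the $\ell^2$-square-root built into the definition of $\sigma^{(l)}_{k} b$ as precisely the combinatorial object produced by regrouping the coefficients $b_{R_l'\times S}$ at the critical scale, with the boundary level $|R_l|=2^{-k}$ absorbed into that same sum to close the equality.
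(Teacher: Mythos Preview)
Your argument is correct: the identity $m_R(E^{(l)}_k f)=m_R f$ for $|R_l|\ge 2^{-k}$ and $m_R(E^{(l)}_k f)=m_{R^*}f$ for $|R_l|<2^{-k}$ is exactly right, and the regrouping at the critical scale collapses to $|(\sigma^{(l)}_k b)_{R^*}|^2$ as you describe, yielding the stronger pointwise equality $\|\Pi_b E^{(l)}_k f\|_2=\|\Pi_{\sigma^{(l)}_k b} f\|_2$.

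The paper does not actually prove this lemma; it simply cites \cite{pottsehba1}. Your direct computation is the natural proof and is presumably what appears in that reference.
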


The following lemma is the bedrock of our proof.
\begin{lemma} \label{lemma:core2}  Let $\phi \in \BMO^d(\T^N)$, $b\in \bmo^d(\T^N)$, and $k,j \in \N_0$.
  Then
$$  \|\Pi\left(\Pi(\phi,b), E_{k}^{(j)}\,\,  \cdot \right)\|_{L^2(\T^N) \to L^2(\T^N)} \lesssim (k+1) \, \|\phi\|_{\BMO^d(\T^N)} \|b \|_{\bmo^d(\T^N)}.
           $$
\end{lemma}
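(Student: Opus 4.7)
The plan is to reduce the bound to a one-parameter Carleson embedding. Applying Lemma \ref{lemma:core} with the outer symbol $\Pi(\phi,b)$ in place of $b$ yields
$$\|\Pi\bigl(\Pi(\phi,b),E^{(j)}_{k}\,\cdot\bigr)\|_{L^2\to L^2} = \|\Pi_{\sigma^{(j)}_{k}\Pi(\phi,b)}\|_{L^2\to L^2},$$
and the classical characterization $\|\Pi_c\|_{L^2\to L^2}\approx \|c\|_{\BMO^d(\T^N)}$ (one direction is a direct Carleson embedding in the product Haar system, the other is obtained by testing against indicator functions of open sets) reduces the task to proving
$$\|\sigma^{(j)}_{k}\Pi(\phi,b)\|_{\BMO^d}\lesssim (k+1)\,\|\phi\|_{\BMO^d}\|b\|_{\bmo^d}.$$
Writing $R = R_j\times S$ with $S\in \D^{N-1}$, the Haar coefficients of $c:=\sigma^{(j)}_{k}\Pi(\phi,b)$ equal $\phi_R\,m_R b$ when $|R_j|>2^{-k}$, equal $\bigl(\sum_{R_j'\subseteq R_j}|\phi_{R_j'\times S}|^2|m_{R_j'\times S}b|^2\bigr)^{1/2}$ when $|R_j|=2^{-k}$, and vanish otherwise.

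Given an open set $\Omega\subseteq \T^N$ I split $\sum_{Q\subseteq\Omega}|c_Q|^2$ according to the scale of $Q_j$. For $|Q_j|>2^{-k}$, Lemma \ref{lemma:avgrowth} supplies $|m_Q b|\lesssim (k+1)\|b\|_{\bmo^d}$ and the defining inequality $\sum_{Q\subseteq\Omega}|\phi_Q|^2\le \|\phi\|_{\BMO^d}^2|\Omega|$ gives the required $(k+1)^2\|\phi\|_{\BMO^d}^2\|b\|_{\bmo^d}^2|\Omega|$. For $|Q_j|=2^{-k}$, the essential observation is that $m_{Q_j'\times S}b = m_{Q_j'}b_S$, where the one-variable function $b_S(t_j):=m_Sb(t_j)$ lies in $\BMO^d(\T)$ with $\|b_S\|_{\BMO^d(\T)}\le\|b\|_{\bmo^d(\T^N)}$ (the dyadic analogue of the proposition preceding Lemma \ref{lem:testfunct}). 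Splitting $m_{Q_j'}b_S = m_{Q_j}b_S + m_{Q_j'}(b_S - m_{Q_j}b_S)$ produces a leading term controlled just as in the easy case via $|m_{Q_j}b_S|\lesssim (k+1)\|b\|_{\bmo^d}$, plus an oscillatory remainder.

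The remainder is $\sum_{Q_j'\subseteq Q_j}|\phi_{Q_j'\times S}|^2\bigl|m_{Q_j'}(b_S - m_{Q_j}b_S)\bigr|^2$, and will be controlled by one-parameter dyadic Carleson embedding: for fixed $S$, the sequence $(|\phi_{Q_j'\times S}|^2)_{Q_j'\subseteq Q_j}$ has Carleson norm at most $|S|\|\phi\|_{\BMO^d}^2$, because the product $\BMO^d$ definition applied to $T_j\times S$ gives $\sum_{Q_j'\subseteq T_j}|\phi_{Q_j'\times S}|^2\le \|\phi\|_{\BMO^d}^2|T_j||S|$ for every dyadic $T_j\subseteq Q_j$. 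Carleson embedding then bounds the remainder by $|S|\|\phi\|_{\BMO^d}^2\|b_S - m_{Q_j}b_S\|_{L^2(Q_j)}^2$, and one-dimensional John--Nirenberg controls the $L^2$ factor by $|Q_j|\|b\|_{\bmo^d}^2$; summing the resulting bound $\lesssim|Q|\|\phi\|_{\BMO^d}^2\|b\|_{\bmo^d}^2$ over the disjoint rectangles $Q\subseteq\Omega$ with $|Q_j|=2^{-k}$ finishes the argument. I expect the principal obstacle to be precisely this oscillatory remainder, since the naive bound $|m_{Q_j'\times S}b|\lesssim (k'+1)\|b\|_{\bmo^d}$ for $Q_j'\in\D_{k'}$ diverges as $k'\to\infty$ and forces one to exploit the fine Carleson structure of the Haar coefficients of $\phi$ in the single variable $t_j$ rather than estimating term by term.
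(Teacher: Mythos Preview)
Your reduction via Lemma~\ref{lemma:core} to the $\BMO^d$ norm of $c=\sigma^{(j)}_k\Pi(\phi,b)$ is correct and matches the paper, as does your treatment of the large-scale part $|Q_j|>2^{-k}$. The gap is in the final summation of your oscillatory remainder. You obtain, for each fixed $Q=Q_j\times S$ with $|Q_j|=2^{-k}$, the bound
\[
\sum_{Q_j'\subseteq Q_j}|\phi_{Q_j'\times S}|^2\bigl|m_{Q_j'}(b_S-m_{Q_j}b_S)\bigr|^2 \lesssim |Q|\,\|\phi\|_{\BMO^d}^2\|b\|_{\bmo^d}^2,
\]
and then propose to sum this ``over the disjoint rectangles $Q\subseteq\Omega$ with $|Q_j|=2^{-k}$.'' But those rectangles are \emph{not} disjoint: for a fixed $Q_j\in\D_k$ the rectangles $Q_j\times S$ with varying $S\in\D^{N-1}$ are nested, and $\sum_{Q\subseteq\Omega,\,|Q_j|=2^{-k}}|Q|$ is in general infinite (already for $\Omega=\T^N$). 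So the per-$Q$ estimate cannot be summed to yield $|\Omega|$. Your one-parameter Carleson embedding is correct for a single $Q$, but it throws away exactly the multiparameter Carleson structure in the $S$-variable that is needed to control the sum over $S$.

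The paper handles term~II without any pointwise control of $m_{Q_j'\times S}b$: it recognises, for a rectangle $R$ with $|R_1|=2^{-k}$, that
\[
\|P_R\sigma_k(Q_k\Pi_\phi b)\|_2^2=\sum_{R''\subseteq R}|\phi_{R''}|^2|m_{R''}b|^2\le\|\Pi_\phi(\chi_R b)\|_2^2\lesssim\|\phi\|_{\BMO^d}^2\|\chi_R b\|_2^2,
\]
using the $L^2$-boundedness of $\Pi_\phi$, and then invokes the second inequality of Lemma~\ref{lemma:avgrowth}, $\|\chi_R b\|_2^2\lesssim(k+1)^2|R|\|b\|_{\bmo^d}^2$. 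This is essentially the \emph{full} $N$-parameter Carleson embedding applied to the test function $\chi_R b$, which is what captures the $S$-summation correctly. If you want to repair your route, you would need to replace the one-variable Carleson step by this global one; but at that point the argument becomes the paper's.
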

\begin{proof}
Following Lemma \ref{lemma:core}, we have to estimate the $\BMO^d(\T^N)$ norm of $\sigma_{k}^{(j)} (\Pi_{\phi} b)=\sigma_{k}^{(j)} (\Pi(\phi, b))$. Without loss of generality, we can suppose that $j=1$. For simplicity, we remove the supscript $(1)$ and write $E_k, Q_k$ and $\sigma_k$ for $E_k^{(1)}$, $Q_k^{(1)}$ and $\sigma_k^{(1)}$ respectively. Clearly
\begin{equation*}
   \sigma_{k} ( \Pi_\phi b) =  \sigma_{k} (E_{k} \Pi_\phi b) +
    \sigma_{k}(Q_{k} \Pi_\phi b)  = I + II.
\end{equation*}

Let us  start with term I.
For any open set $\Omega \subseteq \T^N$,
\begin{eqnarray*}
    \frac{1}{|\Omega|} \|P_{\Omega} E_{k} \Pi_\phi b\|_{L^2(\T^N)}^2
&=& \frac{1}{|\Omega|} \sum_{R = R_1 \times\cdots\times R_N, |R_1| > 2^{-k},
   R \subset \Omega}  |\phi_R|^2 |m_R b|^2 \\
& \lesssim & \frac{(k+1)^2}{|\Omega|} \sum_{R_1 \times\cdots\times R_N, |R_1| > 2^{-k},
   R \subset \Omega}  |\phi_R|^2   \|b\|^2_{\bmo^d(\T^N)} \\
&\lesssim & (k+1)^2 \| \phi \|^2_{\BMO^d(\T^N)}  \|b\|^2_{\bmo^d(\T^N)}
\end{eqnarray*}
with the help of Lemma \ref{lemma:avgrowth} at the second inequality.

\vskip .2cm
To deal with term II, we observe that $\sigma_{k}( Q_{k} \Pi_\phi b)$ has only nontrivial Haar
coefficients for $R=R_1\times\cdots\times R_N \in \RR$, with $|R_1|=2^{-k}$. We first compute $\|P_R \sigma_{k}(Q_{k} \Pi_\phi b )\|_{L^2(\T^N)}^2$
for $R\in \RR$ a rectangle of this type.
\begin{eqnarray*}
  \int_R |P_R \sigma_{k}(Q_{k} \Pi_\phi b )   |^2 ds dt
&\le& \sum_{S \subseteq R}
  |\phi_{S}|^2 |m_{S} b|^2 \\
&=&  \|\Pi_\phi \chi_R b\|^2_{L^2(\T^N)} \\
& \lesssim &   \|\phi\|^2_{\BMO^d(\T^N)} \| \chi_R b\|_{L^2(\T^N)}^2 \\
&\lesssim &   (k+1)^2 |R|\| \phi\|^2_{\BMO^d(\T^N)} \|b\|^2_{\bmo^d(\T^N)}
\end{eqnarray*}
where we used Lemma \ref{lemma:avgrowth} at the last inequality.

Now for $\Omega\subseteq \T^N$ open, we denote by $\mathcal {M}_k(\Omega)$ the set of all rectangles $R=R_1\times\cdots\times R_N\in \RR,\,\,\, R\subseteq \Omega$ which are maximal in $\Omega$ with respect to $|R_1|=2^{-k}$.
If $\mathcal {M}_k(\Omega)=\emptyset$, then $$\|P_\Omega\sigma_k(Q_{k} \Pi_\phi b )\|_{L^2(\T^N)}^2=0\le (k+1)^2 |\Omega|\| \phi\|^2_{\BMO^d(\T^N)} \|b\|^2_{\bmo^d(\T^N)}.$$

If $\mathcal {M}_k(\Omega)\neq \emptyset$, then using the above estimate of the $L^2$ norm of $P_R\sigma_{k}( Q_{k} \Pi_\phi b)$ for $R=R_1\times\cdots\times R_N \in \RR$, with $|R_1|=2^{-k}$, we obtain
\begin{eqnarray*}
\|P_\Omega\sigma_k(Q_{k} \Pi_\phi b )\|_{L^2(\T^N)}^2 &=&  \sum_{R\in \mathcal {M}_k(\Omega)}\sum_{R'\subseteq R, |R'_1|=2^{-k}}|\left(\sigma_{k}( Q_{k} \Pi_\phi b)\right)_{R'}|^2\\ &\le& \sum_{R\in \mathcal {M}_k(\Omega)}\|P_R \sigma_{k}(Q_{k} \Pi_\phi b )\|_{L^2(\T^N)}^2\\ &\lesssim& (k+1)^2\| \phi\|^2_{\BMO^d(\T^N)} \|b\|^2_{\bmo^d(\T^N)}\sum_{R\in \mathcal {M}_k(\Omega)}|R|\\ &\lesssim& (k+1)^2 |\Omega|\| \phi\|^2_{\BMO^d(\T^N)} \|b\|^2_{\bmo^d(\T^N)}.
\end{eqnarray*}

The proof is complete.
\end{proof}

We deduce the following from Definition \ref{def:LMOd}, the equality (\ref{eq:LMOLMO_j}) and, Lemma \ref{lemma:core2}.
\begin{lemma} \label{lemma:core2bis}  Let $\phi \in \LMO^d(\T^N)$,  $b \in \bmo^d(\T^N)$ and $j,k,l \in \N_0$.
  Then
$$  \|\Pi\left(\Pi(Q_{j}^{(l)}\phi,b), E_{k}^{(l)}\,\,  \cdot \right)\|_{L^2(\T^N) \to L^2(\T^N)} \lesssim \frac{k + 1}{j + 1} \, \|\phi\|_{\LMO^d(\T^N)} \|b \|_{\bmo^d(\T^N)}.
           $$
\end{lemma}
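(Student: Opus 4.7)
The statement is a direct combination of Lemma~\ref{lemma:core2} (with the symbol $\phi$ there playing the role of an auxiliary $\BMO^d$ function) and the $\LMO^d$-control of $Q_{j}^{(l)}\phi$ furnished by Definition~\ref{def:LMOd} together with the decomposition~(\ref{eq:LMOLMO_j}). So the plan is: first verify that $Q_{j}^{(l)}\phi$ belongs to $\BMO^d(\T^N)$ with a sharp $1/(j+1)$ decay, then feed this into Lemma~\ref{lemma:core2} to produce the $(k+1)/(j+1)$ factor.

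For the first step I would observe that the $N$-tuple $\vj_{\,l} := (0,\dots,0,j,0,\dots,0)$ (with $j$ in the $l$-th slot) satisfies $Q_{\vj_{\,l}} = Q_{j}^{(l)}$ directly from definition~(\ref{eq:qdef}). Since $\phi \in \LMO^d(\T^N)$, Definition~\ref{def:LMOd} applied to $\vj_{\,l}$ gives
\[
\|Q_{j}^{(l)}\phi\|_{\BMO^d(\T^N)} \;=\; \|Q_{\vj_{\,l}}\phi\|_{\BMO^d(\T^N)} \;\le\; \frac{1}{j+N}\,\|\phi\|_{\LMO^d} \;\lesssim\; \frac{1}{j+1}\,\|\phi\|_{\LMO^d},
\]
which is exactly the $\LMO_{l}^{d}$-estimate expected from the decomposition~(\ref{eq:LMOLMO_j}).

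For the second step I would apply Lemma~\ref{lemma:core2} with the auxiliary symbol $Q_{j}^{(l)}\phi \in \BMO^d(\T^N)$ in the outer paraproduct, the same $b \in \bmo^d(\T^N)$ in the inner one, and expectation in the $l$-th variable. This yields
\[
\bigl\| \Pi\bigl(\Pi(Q_{j}^{(l)}\phi,\,b),\; E_{k}^{(l)}\cdot\bigr) \bigr\|_{L^2 \to L^2}
\;\lesssim\; (k+1)\,\|Q_{j}^{(l)}\phi\|_{\BMO^d}\,\|b\|_{\bmo^d}.
\]
Plugging in the estimate from the first step gives the desired bound $\lesssim \tfrac{k+1}{j+1}\,\|\phi\|_{\LMO^d}\,\|b\|_{\bmo^d}$.

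There is no genuine obstacle here beyond being careful with the notation. The only subtle point worth flagging is the identification $Q_{j}^{(l)} = Q_{\vj_{\,l}}$, which is where the $\LMO^d$ hypothesis is converted into the quantitative $\BMO^d$-decay needed by Lemma~\ref{lemma:core2}; once that is in place the argument reduces to chaining two inequalities.
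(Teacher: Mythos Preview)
Your proposal is correct and follows exactly the route the paper indicates: the paper simply states that the lemma is deduced from Definition~\ref{def:LMOd}, the identity~(\ref{eq:LMOLMO_j}), and Lemma~\ref{lemma:core2}, and you have filled in precisely those details. The identification $Q_{j}^{(l)} = Q_{\vj_{\,l}}$ with $\vj_{\,l}=(0,\dots,0,j,0,\dots,0)$ is the right way to extract the $1/(j+1)$ decay directly from Definition~\ref{def:LMOd}, after which Lemma~\ref{lemma:core2} applied to the $\BMO^d$ symbol $Q_{j}^{(l)}\phi$ gives the $(k+1)$ factor.
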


\begin{proof}[ Proof of Theorem \ref{thm:main}]. We begin by proving necessity. Suppose that \\
$\Pi_\phi: \bmo^d(\T^N) \rightarrow \BMO^d(\T^N)$ is bounded.  Let $R = R_1 \times\cdots\times R_N$ be a given dyadic rectangle, and let
$\Omega \subseteq R$ be open. We take as test function, $b(t_1,\cdots,t_N)=\log_R(t_1,\cdots,t_N)=\sum_{j=1}^N\log_{R_j}(t_j)$, where for an interval $I$, the function $\log_I(x)$ is given in Lemma \ref{lem:testfunct}.
Then
\begin{eqnarray*}
    \frac{\left(\sum_{j=1}^N\log\frac{4}{|R_j|}\right)^2}{|\Omega|} \sum_{Q \in \RR, Q \subseteq \Omega} |\phi_Q|^2
 &\approx&  
     \frac{1}{|\Omega|} \sum_{Q \in \RR, Q \subseteq \Omega} |\phi_Q|^2 |m_Qb|^2\\
  &\le& \|\Pi_\phi b\|^2_{\BMO^d(\T^N)}\\ &\le& C^2 \|\Pi_\phi\|^2_{\bmo^d(\T^N) \to \BMO^d(\T^N)}.
\end{eqnarray*}
Thus $\phi \in \LMO^d(\T^N)$ by Proposition \ref{prop:LMOequivchar}, with the appropriate norm estimate.

To prove sufficiency of the $\LMO^d(\T^N)$ condition for the boundedness of the paraproduct from $\bmo^d(\T^N)$ to $\BMO^d(\T^N)$,
we recall that $\phi \in \LMO^d(\T^N)$ implies that $\phi\in \LMO_j^d(\T^N)$, $j=1,\cdots,N$. Let $\phi \in \LMO^d(\T^N)$ and $b \in \bmo^d(\T^N)$. We recall that the following estimate holds
$$ \| \Pi_\phi b \|_{\BMO^d(\T^N)} \approx \| \Pi_{\Pi(\phi,b)}\|_{L^2(\T^N) \to L^2(\T^N)}=\| \Pi\left(\Pi(\phi,b),\cdot\right)\|_{L^2(\T^N) \to L^2(\T^N)}.$$ Motivated by the equality (\ref{eq:LMOLMO_j}) and Lemma \ref{lemma:core2}, we would like to apply Cotlar's Lemma in one direction (parameter).

For $N \in \N_0$, let
\begin{equation} \label{eq:pnk}
\begin{split}
   P_{N} &= \sum_{j=2^N-1}^{2^{N+1}-2}
   \Da_{j}^{(1)}, \\
P^{N} &= \sum_{j=2^N}^\infty \Da_{j}^{(1)},
\end{split}
\end{equation}
and
$$
   T_{N} =  \Pi_{\Pi(\phi, b)} P_{N}=\Pi\left(\Pi(\phi,b), P_{N}\, \cdot\right).
$$

Our operator can be written now $ \Pi(\Pi(\phi, b), \cdot ) = \sum_{N=0}^\infty T_{N}$. Following exactly  \cite{pottsehba1} with the help of Lemma \ref{lemma:core2}, as $T_{N} T_{N'}^* =0$ for $N \neq N'$, we obtain that
$$\|T_{N}^* T_{N'}\|\le 2^{-|N-N'|} \|\phi\|^2_{\LMO_1^d(\T^N)}\|b\|^2_{\bmo^d(\T^N)}.$$
Thus, by applying Cotlar's Lemma, we obtain that $T= \Pi(\Pi(\phi, b),\cdot)$ is bounded, and there exists an absolute constant $C>0$ with
$$
\|\Pi\left(\Pi(\phi, b), \cdot \right)\|_{L^2(\T^N)\rightarrow L^2(\T^N)}\le C \|\phi\|_{\LMO^d(\T^N)}\|b\|_{\bmo^d(\T^N)}.
$$
Consequently,
$$
     \| \Pi(\phi, b)\|_{\BMO^d(\T^N)} \le C \|\phi\|_{\LMO^d(\T^N)}\|b\|_{\bmo^d(\T^N)}.
$$

\end{proof}

\section{The other paraproducts}
The other paraproducts correspond to $\vec {\beta}=\vec {1}=(1,\cdots,1)$ and $\vec {\beta}\neq \vec {0}, \vec {1}$. The first one is the adjoint of $\Pi=\Pi^{\vec {0}}$, it is defined on $L^2(\T^N)$ by
$$
\Pi^{\vec {1}}(\phi,f)=\Delta_\phi f=\Delta(\phi,f) =  \sum_{R \in \RR} \frac{\chi_R}{|R|} \phi_R f_R,
$$ then there are the mixed paraproducts
given by the following general form
$$
\Pi^{\vec {\beta}}(\phi,f)=\sum_{R=Q \times S \in \RR}\frac{\chi_Q(s)}{|Q|} h_S(t) \phi_{R} m_S f_Q,
$$
where $R_j=Q_{j}$ if $\beta_j=1$ and $R_j=S_j$ if $\beta_j=0$.

Let us introduce some further definitions and notations.
\begin{definition} \label{def:multLMO}
Let $\phi \in L^2(\T^N)$, $\vec {\delta}=(\delta_1,\cdots,\delta_N)$, $\delta_j\in \{0,1\}$. Then $ \phi \in \LMO_{\vec {\delta}}^d(\T^N)$, if and only if there exists $C >0$ such that for each
dyadic rectangle $R= R_1 \times R_2 \times \cdots \times R_N \in \D^N$ and each open set $\Omega \subseteq R $,
$$
   \frac{\left(\log(\frac{4}{|R_{\delta_1}|})+ \cdots  \log(\frac{4}{|R_{\delta_N}|})\right)^2}{|\Omega|} \sum_{Q \in \RR, Q \subseteq \Omega} |\phi_Q|^2 \le C,
$$
where
$$
R_{\delta_j}=\left\{ \begin{matrix} R_j &\text{if}& \delta_j=0\\
                    \T &\text{otherwise.}&
                    \end{matrix}\right.
$$
\end{definition}

When $\vec {\delta}=\vec{0}=(0,\cdots,0)$, $\LMO_{\vec {\delta}}^d(\T^N)=\LMO^d(\T^N)$. One easily sees that for
$\vec {\delta}=(1,\cdots,1)= \vec{1}$, the corresponding space is just the space $\BMO^d(\T^N)$. We observe that
$$\LMO_{\vec {\delta}}^d(\T^N)=\bigcap_{j=1, \delta_j=0}^N\LMO_j^d(\T^N).$$

Let us recall with \cite{bp} the following result.
\begin{proposition}\label{prop:Deltabp}
Let $\phi \in L_0^2(\T^N)$. Then
$\Pi^{(1,\dots,1)}_\phi = \Delta_\phi: \BMO^d(\T^N) \rightarrow \BMO^d(\T^N)$ is bounded, if and only if $ \phi \in \BMO^d(\T^N)$.
Moreover, $$\|\Delta_\phi\|_{\BMO^d(\T^N) \rightarrow \BMO^d(\T^N)} \approx\|\phi\|_{\BMO^d(\T^N)}.$$
\end{proposition}

Our main result in this section is the following.
 \begin{theorem}   \label{thm:others}
Let $\phi \in L_0^2(\T^N)$, $\vec {\beta}=(\beta_1,\cdots,\beta_n)$, $\beta_j\in \{0,1\}$. Then for $\vec {\beta}\neq (0,\cdots,0),  (1,\cdots,1)$,


$\Pi^{\vec {\beta}}_\phi:\bmo^d(\T^N) \rightarrow \BMO^d(\T^N)$ is bounded if $ \phi \in \LMO_{\vec {\beta}}^d(\T^N)$.
Moreover, $$\|\Pi^{\vec {\beta}}\phi\|_{\bmo^d(\T^N) \rightarrow \BMO^d(\T^N)}
  \lesssim \|\phi\|_{\LMO_{\vec {\beta}}^d(\T^N)}.$$

\end{theorem}
 The proof of the above result requires the following lemma.


\begin{lemma} \label{lemma:core2one}  Let
 $\phi\in \BMO^d(\T^N)$, $b\in \bmo^d(\T^N)$, and $ j, k \in \N$, $\vec {\beta}\neq \vec {0}, \vec {1}$, with $\beta_j=1$. Then
$$
   \|\Pi\left(\Pi^{(\vec {\beta})}(\phi, b), E^{(j)}_{k}\, \cdot \right)\|_{L^2(\T^N) \to L^2(\T^N)} \lesssim (k+1)\|\phi\|_{\BMO^d(\T^N)} \|b \|_{\bmo^d(\T^N)}.
$$
\end{lemma}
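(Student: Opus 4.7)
The plan is to follow the strategy of Lemma \ref{lemma:core2} with adaptations for the mixed paraproduct structure. Since the outer paraproduct in $\Pi(\Pi^{\vec{\beta}}(\phi,b), E_k^{(j)}\cdot)$ is the main paraproduct, Lemma \ref{lemma:core} reduces the task to the $\BMO^d$ estimate
\[
\bigl\|\sigma_k^{(j)}(\Pi^{\vec{\beta}}(\phi, b))\bigr\|_{\BMO^d} \lesssim (k+1)\,\|\phi\|_{\BMO^d}\,\|b\|_{\bmo^d}.
\]
I would then split
\[
\sigma_k^{(j)}(\Pi^{\vec{\beta}}_\phi b) = E_k^{(j)}(\Pi^{\vec{\beta}}_\phi b) + \sigma_k^{(j)}(Q_k^{(j)} \Pi^{\vec{\beta}}_\phi b) =: I + II,
\]
using that $\sigma_k^{(j)}$ acts as the identity on rectangles with $j$-th side length larger than $2^{-k}$.

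For term $I$, I would test against an open set $\Omega$ and compute
\[
\frac{1}{|\Omega|}\|P_\Omega I\|_2^2 = \frac{1}{|\Omega|}\sum_{R \subseteq \Omega,\,|R_j| > 2^{-k}}|(\Pi^{\vec{\beta}}_\phi b)_R|^2.
\]
To proceed I would compute the Haar coefficient $(\Pi^{\vec{\beta}}_\phi b)_R$ explicitly: since $\beta_j = 1$, the $j$-th output factor of $\Pi^{\vec{\beta}}$ is of mean type $|R_j|^{-1/2}\chi_{R_j}$, and pairing with $h_R$ in the standard Haar basis expresses this coefficient as a one-parameter averaging, in the $j$-th variable, of quantities of the form $\phi_{R'}\langle b, h_{R'}^{\vec{\delta}}\rangle$ for auxiliary rectangles $R'$ whose other factors agree with $R$. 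Combining the $\BMO^d$ property of $\phi$ with the growth bound $|m_{R'}b|\lesssim (k+1)\|b\|_{\bmo^d}$ of Lemma \ref{lemma:avgrowth} then yields the desired $(k+1)^2\|\phi\|_{\BMO^d}^2\|b\|_{\bmo^d}^2$ bound.

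For term $II$, the nontrivial Haar coefficients occur only at rectangles $R$ with $|R_j| = 2^{-k}$, and the $\sigma_k^{(j)}$-coefficient is the $\ell^2$-norm over finer-in-$j$ sub-rectangles. I would bound
\[
\frac{1}{|R|}\int_R|P_R\, II|^2 \le \frac{1}{|R|}\sum_{S \subseteq R,\,|S_j| \le 2^{-k}}|(\Pi^{\vec{\beta}}_\phi b)_S|^2
\]
by $\tfrac{1}{|R|}\|\Pi^{\vec{\beta}}_\phi(\chi_R b)\|_2^2$ up to a controllable non-local error, and then invoke the classical $L^2$-boundedness $\|\Pi^{\vec{\beta}}_\phi\|_{L^2\to L^2}\lesssim \|\phi\|_{\BMO^d}$ together with $\|\chi_R b\|_2^2 \lesssim (k+1)^2|R|\|b\|_{\bmo^d}^2$ from Lemma \ref{lemma:avgrowth}, matching exactly the Lemma \ref{lemma:core2} calculation.

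The main technical obstacle is the non-orthogonal structure of the $h_R^{\vec{\beta}}$ expansion when $\vec{\beta}$ has coordinates equal to $1$: mean-type output factors at different scales have nontrivial overlaps, so the re-expansion of $\Pi^{\vec{\beta}}(\phi, b)$ in the standard Haar basis couples rectangles across scales in the $j$-th coordinate. This complicates both the explicit Haar-coefficient computation in term $I$ and the localization step in term $II$, where the rectangles $T$ contributing to $(\Pi^{\vec{\beta}}_\phi b)_S$ for $S\subseteq R$ need not themselves lie inside $R$ when some $\beta_l = 1$; the resulting non-local contributions must be identified and absorbed using the $\BMO^d$ and $\bmo^d$ hypotheses on $\phi$ and $b$, with careful bookkeeping to extract exactly one factor of $(k+1)$ without losing a logarithm.
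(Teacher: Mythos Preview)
Your splitting $\sigma_k^{(j)}(\Pi^{\vec{\beta}}_\phi b)=E_k^{(j)}(\Pi^{\vec{\beta}}_\phi b)+\sigma_k^{(j)}Q_k^{(j)}(\Pi^{\vec{\beta}}_\phi b)$ mimics Lemma~\ref{lemma:core2}, but that lemma worked only because $(\Pi_\phi b)_R=\phi_R\,m_Rb$. For the mixed paraproduct, the standard Haar coefficients of $\Pi^{\vec{\beta}}_\phi b$ involve telescoping sums through the $\chi_Q/|Q|$ factors and couple scales in every $\beta=1$ direction. You correctly name this as ``the main technical obstacle,'' but you do not resolve it: the phrases ``then yields the desired bound'' in term~$I$ and ``up to a controllable non-local error'' in term~$II$ hide exactly the missing work. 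In particular, for $S\subseteq R$ the rectangles contributing to $(\Pi^{\vec{\beta}}_\phi b)_S$ need not lie in $R$, so the passage to $\|\Pi^{\vec{\beta}}_\phi(\chi_R b)\|_2$ is unjustified, and no mechanism is offered to control the error.

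The paper avoids the obstacle by a different decomposition: it splits the \emph{symbol},
\[
\sigma_k^{(j)}\bigl(\Pi^{\vec{\beta}}(\phi,b)\bigr)=\sigma_k^{(j)}\bigl(\Pi^{\vec{\beta}}(E_k^{(j)}\phi,b)\bigr)+\sigma_k^{(j)}\bigl(\Pi^{\vec{\beta}}(Q_k^{(j)}\phi,b)\bigr),
\]
so each piece keeps the mixed-paraproduct form and $\sigma_k^{(j)}$ acts transparently (the proof in fact runs in a coordinate with $\beta_j=0$, where the output already carries an honest Haar factor tied to $\phi$). The decisive idea, absent from your sketch, is then to freeze the Haar factor $S'$ and recognize the remaining sum over the $\beta=1$ variables as a $\Delta$-paraproduct: $\Delta_{P_Q\phi_{S'}}(m_{S'}b)$ for the $Q_k\phi$ piece and $\Delta_{m_Sb}(P_{\mathcal J_S}\phi_S)$ for the $E_k\phi$ piece. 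Since $b\in\bmo^d$ gives $m_{S'}b\in\BMO^d$ \emph{uniformly} in $S'$, the bound $\|\Delta_\psi f\|_2\lesssim\|\psi\|_2\|f\|_{\BMO^d}$ closes both estimates with a constant independent of $k$; Lemma~\ref{lemma:avgrowth} is not used at all. Your plan to reach the result through $\|\chi_R b\|_2^2\lesssim(k+1)^2|R|$ is therefore both lossy and not the mechanism that actually works here.
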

\begin{proof} We can suppose that $j=1$ and write $E_k$ for $E_k^{(1)}$, $\sigma_k$ for $\sigma_k^{(1)}$, and $Q_k$ for $Q_k^{(1)}$.
We have to estimate
\begin{equation*}
\| \sigma_k( {\Pi^{\vec {\beta}}}(\phi, b))\|_{\BMO^d(\T^N)} \le \| \sigma_k( {\Pi^{\vec {\beta}}}({E_k\phi}, b))\|_{\BMO^d(\T^N)}
 + \| \sigma_k(  {\Pi^{\vec {\beta}}}({Q_k\phi}, b))\|_{\BMO^d(\T^N)}.
\end{equation*}

We start with the second term.
Since
${\Pi^{\vec {\beta}}}(Q_k\phi, b)$ has no nontrivial Haar terms in the first variable for rectangle  $R$ with $|R_1| > 2^{-k}$,
$$
    \sigma_k( {\Pi^{\vec {\beta}}}(Q_k\phi, b)) =
       \sum_{Q } \sum_{S=S_1\times T, |S_1|= 2^{-k}} h_S(s)  (\sum_{S_1' \subseteq S_1} |\phi_{S_1'TQ}|^2 |m_{S_1'T} b_Q|^2)^{1/2}
             \frac{\chi_Q}{|Q|}(t)
$$
and this has only nontrivial Haar terms in the first variable for rectangles $R=R_1\times\cdots\times R_N$ with
$|R_1|=2^{-k}$ (here $R_j$ is understood as an interval in $j$th variable). 
Hence we first compute $\|P_R \sigma_k( {\Pi^{\vec {\beta}}}(Q_k\phi, b))\|_{L^2(\T^N)}$ for
$R\in \RR$ of the form $R=R_1\times\cdots\times R_N=Q \times S$, with $|R_1| = 2^{-k}$, $Q\in \D^K$.

We have
\begin{eqnarray*}
   \| P_R \sigma_k( {\Pi^{\vec {\beta}}}(Q_k\phi, b))\|_{L^2(\T^N)}^2
   &=&  \| P_R \sigma_k( {\Pi^{\vec {\beta}}}({P_RQ_k\phi}, b))\|_{L^2(\T^N)}^2 \\
 &\le &  \| \sigma_k( {\Pi^{\vec {\beta}}}({P_RQ_k\phi}, b))\|_{L^2(\T^N)}^2 \\ &\le&
  \|  {\Pi^{\vec {\beta}}}({P_RQ_k\phi}, b)\|_{L^2(\T^N)}^2 \\
  &=& \|\sum_{Q' \subseteq Q} \sum_{S' \subseteq S} h_{S'} \frac{\chi_{Q'}}{|Q'|}(t)
     \phi_{S'Q'} m_{S'} b_{Q'} \|_{L^2(\T^N)}^2\\
  &=& \sum_{S' \subseteq S}\|\sum_{Q' \subseteq Q}\frac{\chi_{Q'}}{|Q'|}(t)
     \phi_{S'Q'} m_{S'} b_{Q'}\|_{L^2(\T^K)}^2\\
 &=& \sum_{S' \subseteq S}\|\Delta_{P_Q\phi_{S'}}m_{S'}b\|_{L^2(\T^K)}^2\\ &=& \sum_{S' \subseteq S}\|\Delta_{m_{S'}b}{P_Q\phi_{S'}}\|_{L^2(\T^K)}^2\\ &\lesssim& \sum_{S' \subseteq S}\|P_Q\phi_{S'}\|_{L^2(\T^K)}^2\|m_{S'}b\|_{\BMO^d(\T^K)}^2\\ &\lesssim& \|b\|_{\bmo^d(\T^N)}^2\sum_{S' \subseteq S}\|P_Q\phi_{S'}\|_{L^2(\T^K)}^2\\ &=& \|b\|_{\bmo^d(\T^N)}^2\|P_{Q\times S}\phi\|_{L^2(\T^N)}^2\\
   &\lesssim& |R|\|\phi \|_{\BMO^d(\T^N)}^2 \|b\|^2_{\bmo^d(\T^N)}.
\end{eqnarray*}

Next, for general open set $\Omega\subseteq \T^N$, we still denote by $\mathcal {M}_k(\Omega)$ the set of all rectangles $R=R_1\times\cdots\times R_N\in \RR,\,\,\, R\subseteq \Omega$ which are maximal in $\Omega$ with respect to $|R_1|=2^{-k}$.
If $\mathcal {M}_k(\Omega)=\emptyset$, then $$\|P_\Omega\sigma_k( {\Pi^{\vec {\beta}}}(Q_k\phi, b))\|_{L^2(\T^N)}^2=0\le  |\Omega|\| \phi\|^2_{\BMO^d(\T^N)} \|b\|^2_{\bmo^d(\T^N)}.$$

If $\mathcal {M}_k(\Omega)\neq \emptyset$, then using the above estimate of the $L^2$ norm of $P_R\sigma_k( {\Pi^{\vec {\beta}}}(Q_k\phi, b))$ for $R=R_1\times\cdots\times R_N \in \RR$, with $|R_1|=2^{-k}$, we obtain
\begin{eqnarray*}
\|P_\Omega\sigma_k( {\Pi^{\vec {\beta}}}(Q_k\phi, b))\|_{L^2(\T^N)}^2 &=&  \sum_{R\in \mathcal {M}_k(\Omega)}\sum_{R'\subseteq R, |R'_1|=2^{-k}}|\left(\sigma_k( {\Pi^{\vec {\beta}}}(Q_k\phi, b))\right)_{R'}|^2\\ &\le& \sum_{R\in \mathcal {M}_k(\Omega)}\|P_R \sigma_k( {\Pi^{\vec {\beta}}}(Q_k\phi, b))\|_{L^2(\T^N)}^2\\ &\lesssim& \| \phi\|^2_{\BMO^d(\T^N)} \|b\|^2_{\bmo^d(\T^N)}\sum_{R\in \mathcal {M}_k(\Omega)}|R|\\ &\lesssim&  |\Omega|\| \phi\|^2_{\BMO^d(\T^N)} \|b\|^2_{\bmo^d(\T^N)}.
\end{eqnarray*}

Let us go back to the first term $ \| \sigma_k(
  {\Pi^{\vec {\beta}}}({E_k\phi}, b))\|_{\BMO^d(\T^N)}$. Let $\Omega
  \subseteq \T^N$ be open, $R=Q\times S$, $Q\in \D^K$. $\J_S = \cup_{Q \in \D^K, Q \times S
  \subseteq \Omega } Q $ for $S \in \D^{N-K}$. Then
\begin{eqnarray*}
   \| P_\Omega \left(\sigma_k( {\Pi^{\vec {\beta}}}(E_k\phi, b))\right)\|_{L^2(\T^N)}^2
   &=&  \| P_{\O} \sigma_k ({\Pi^{\vec {\beta}}}(P_\Omega E_k\phi, b))\|_{L^2(\T^N)}^2 \\
& \le &  \| \sigma_k ({\Pi^{\vec {\beta}}}({P_\Omega E_k\phi}, b))\|_{L^2(\T^N)}^2
= \|  {\Pi^{\vec {\beta}}}({P_\Omega E_k\phi}, b)\|_{L^2(\T^N)}^2 \\
   &=& \|\sum_{S \in \D^{N-K}, |S_1| > 2^{-k}} \sum_{Q \in \D^K: S \times Q \subseteq \Omega }  h_{S}(s) \frac{\chi_{Q}}{|Q|}(t)
     \phi_{SQ} m_{S} b_{Q} \|_{L^2(\T^N)}^2\\
 &=& \sum_{S \in \D^{N-K}, |S_1| > 2^{-k}} \|\sum_{Q \subseteq \J_S } \frac{\chi_{Q}}{|Q|}(t)
     \phi_{QS} m_{S}b_{Q} \|_{L^2(\T^K)}^2\\
 &=& \sum_{S \in \D^{N-K}, |S_1| > 2^{-k}} \|\Delta_{m_S b} P_{\J_S} \phi_S \|_{L^2(\T^K)}^2\\
&\lesssim& \sum_{S \in \D^{N-K}, |S_1| > 2^{-k}} \|m_S b\|^2_{\BMO^d(\T^K)}  \|P_{\J_S} \phi_S \|_{L^2(\T^K)}^2\\
&\lesssim&  \|b\|^2_{\bmo^d(\T^N)} \sum_{S \in \D^{N-K}}  \|P_{\J_S} \phi_S \|_{L^2(\T^K)}^2\\
&\lesssim&  \|b\|^2_{\bmo^d(\T^N)}\|P_\Omega \phi \|_{L^2(\T^N)}^2\\
&\lesssim&  \|b\|^2_{\bmo^d(\T^N)} \|\phi\|^2_{\BMO^d(\T^N)}|\Omega|.
\end{eqnarray*}
\end{proof}
As in the last section, we immediately deduce that
\begin{eqnarray*}
  \|\Pi\left(\Pi^{\vec {\beta}}(Q^{(1)}_j \phi, b), E^{(1)}_{k}\, \cdot \right)\|_{L^2(\T^N) \to L^2(\T^N)} &\lesssim& \frac{k+1}{j+1} \|\phi\|_{\LMO_1^d(\T^N)} \|b \|_{\bmo^d(\T^N)}\\ &<& \frac{k+1}{j+1} \|\phi\|_{\LMO_{\vec {\beta}}^d(\T^N)} \|b \|_{\bmo^d(\T^N)}.
\end{eqnarray*}

The remainder of the proof of Theorem \ref{thm:others} is now exactly analogous to the proof of
Therem \ref{thm:main}, defining $T_N = \Pi({\Pi^{\vec {\beta}}}(\phi, \cdot), P_N\cdot)$, where
  $P_{N} = \sum_{i=2^N-1}^{2^{N+1}-2}
   \Da^{(1)}_{i}$,
and using Cotlar's Lemma in one parameter once more.

\section{Commutators and dyadic shifts}
In this section, for convenience, we restrict ourselves to the two-parameter case. Our interest here is for the iterated commutators with the Hilbert transforms. We would like to mention some facts that can also be seen as a motivation for this paper.
We will be writing $H_1$ and $H_2$ for the Hilbert transform in the first and second variables respectively. The first fact is a result of Ferguson-Lacey-Sadosky.
\begin{theorem}[\cite{ChFef1, fergsad, ferglac}]    \label{thm:fact}    Let $\phi
  \in \BMO(\R^2)$. Then
$$
   [H_1, [H_2, \phi]]: L^2(\R^2) \rightarrow L^2(\R^2)
$$
is bounded, and $\|[H_1, [H_2, \phi]]\|_{L^2 \rightarrow L^2} \approx
\|\phi\|_{\BMO}$.
\end{theorem}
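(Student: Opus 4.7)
The plan is to prove both directions of the equivalence, relying on the Chang--Fefferman duality $\BMO(\R^2) = H^1(\R^2)^*$ together with a dyadic model of the Hilbert transforms.

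For the upper bound $\|[H_1,[H_2,\phi]]\|_{L^2\to L^2}\lesssim \|\phi\|_{\BMO}$, I would follow the approach of Ferguson--Lacey and represent each Hilbert transform as an average, over random dilations and translations of the dyadic grid, of Petermichl's dyadic shift operators $\mathcal{S}_1,\mathcal{S}_2$. Linearity together with Minkowski's inequality then reduces matters to a uniform $L^2$ estimate for $[\mathcal{S}_1,[\mathcal{S}_2,\phi]]$. Expanding this nested commutator in the two-parameter Haar basis decomposes it into a finite sum of operators of the form $\Pi^{\vec{\beta}}(\phi,\cdot)$ of Section~4 (and shifted variants of the same type), each of which is $L^2\to L^2$ bounded with norm $\lesssim \|\phi\|_{\BMO^d}$ by the Chang--Fefferman product paraproduct theorem. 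Summing the contributions and undoing the averaging returns the continuous estimate.

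For the lower bound, the idea is to combine the duality with the weak factorization of product $H^1$. Starting from the operator identity $[H_1,[H_2,\phi]] = H_1H_2\phi - H_1\phi H_2 - H_2\phi H_1 + \phi H_1H_2$ and using the skew-adjointness $H_i^*=-H_i$, one obtains
\[
   \langle [H_1,[H_2,\phi]]f,\,g\rangle \;=\; \langle \phi,\,\Phi(f,g)\rangle,
\]
where $\Phi(f,g)$ is an explicit bilinear expression in $f$, $g$ and their Hilbert transforms that lies in $H^1(\R^2)$ with $\|\Phi(f,g)\|_{H^1}\lesssim \|f\|_2\|g\|_2$. The weak factorization theorem of Ferguson--Sadosky asserts that finite linear combinations of such elementary tensors are dense in $H^1(\R^2)$ with comparable norms, and pairing with $\phi$ then yields $\|\phi\|_{\BMO}\lesssim \|[H_1,[H_2,\phi]]\|_{L^2\to L^2}$ by duality.

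The most delicate step is the weak factorization itself: for every atom of $H^1(\R^2)$ one must build a near-factorization of the stated tensor form with $L^2$ control on the factors, carried out by an iterative projection-and-remainder construction on open sets of finite measure whose convergence rests on the Chang--Fefferman atomic decomposition of product $H^1$. On the upper-bound side the technical burden is primarily bookkeeping: one must verify that every symbol produced by the Haar expansion of $[\mathcal{S}_1,[\mathcal{S}_2,\phi]]$ remains in $\BMO^d$ with norm controlled by $\|\phi\|_{\BMO^d}$, so that the paraproduct estimates apply term by term.
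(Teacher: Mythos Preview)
The paper does not prove this theorem at all: it is quoted as a known ``fact'' from the literature, with citations to \cite{ChFef1, fergsad, ferglac}, and serves only as motivation for the endpoint questions studied afterwards. There is therefore no proof in the paper to compare your proposal against.

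As an independent sketch your outline is broadly correct, with two points worth tightening. First, the weak factorization of product $H^1(\R^2)$---and hence the lower bound $\|\phi\|_{\BMO}\lesssim\|[H_1,[H_2,\phi]]\|$---is the main result of Ferguson--Lacey \cite{ferglac}, not Ferguson--Sadosky; the latter paper \cite{fergsad} supplies the upper bound and related little-BMO characterizations. Second, on the upper-bound side, expanding the multiplication operator $M_\phi$ in the product Haar basis produces not only the genuine paraproducts $\Pi^{\vec\beta}$ but also the ``diagonal'' pieces $R_{R_\phi}$, $R_{\Pi_\phi}$, $\Pi_{R_\phi}$, $R_{\Delta_\phi}$, $\Delta_{R_\phi}$ (see Section~5 of the paper). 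These are \emph{not} individually $L^2$-bounded for $\phi\in\BMO^d$; one must compute the full iterated commutator $[\mathcal S_1,[\mathcal S_2,\,\cdot\,]]$ of each piece and use the resulting cancellation, after which they become paraproduct-type operators with modified symbols $\tilde\phi\in\BMO^d$ (as in Lemma~5.5 and the Appendix). Your phrase ``shifted variants of the same type'' glosses over this; it is precisely here that the commutator structure, rather than mere $\BMO$ control of $\phi$, is doing work.
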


From this result arises the question of the boundedness of iterated commutators on the endpoints. Our second fact then is the observation that the commutator $[b,H]$ is not in general bounded on $H^1(\mathbb R)$ for $b\in \BMO(\R)$ (see \cite{JPS}). It comes that if we are really considering the boundedness of these commutators on the endpoints ($H^1$ and $\BMO$), then we are only allowed to deal with functions with compact support. This is the idea in \cite{pottsehba1} from which comes our third fact that will be given after introducing several definitions and notations.

Let
  \begin{equation}\label{BMOrestrict}
 \BMO([0,1]^2):=\{f\in \BMO(\R^2): \supp f\subseteq [0,1]^2\}.
 \end{equation}
and
$$
\BMO^d([0,1]^2)= \{ f \in \BMO^d(\R^2): \supp f\subseteq [0,1]^2\}.
$$
 We say that $ f \in \LMO^d([0,1]^2) $, if $f \in \BMO^d([0,1]^2)$ and there exists $C>0$ with
$$
   \|Q_{\vk} f\|_{\BMO^d([0,1]^2)} \le C\frac{1}{(k_1+k_2+2)} \text{ for } \vk=(k_1, k_2) \in \N_0 \times \N_0.
$$
The spaces  $\LMO_{\vba}^d([0,1]^2) $ are defined correspondingly.
We also recall the notion of logarithmic mean oscillation introduced in \cite{pottsehba1}.

\vskip .2cm

We say that $ f \in \mathcal {LMO}^d([0,1]^2) $, if $f \in \BMO^d([0,1]^2)$ and there exists $C>0$ with
$$
   \|Q_{\vk} f\|_{\BMO^d([0,1]^2)} \le C\frac{1}{(k_1+1)(k_2+1)} \text{ for } \vk=(k_1, k_2) \in \N_0 \times \N_0.
$$

We next recall the relation between various spaces and their dyadic counterparts.
Given $\alpha=(\alpha_j)_{j\in \mathbb {Z}}\in \{0,1\}^{\mathbb {Z}}$ and
$r\in [1,2)$, we denote by $\mathcal {D}^{\alpha,r}=r\mathcal {D}^{\alpha}$ the dilated and translated standard dyadic grid $\mathcal {D}$ of $\R$ in the sense of \cite{hyt}. For
$\vec {\alpha}=(\alpha^1,\alpha^2)\in \{0,1\}^{\mathbb {Z}}\times \{0,1\}^{\mathbb {Z}}$ and $\vec {r}=(r_1,r_2)\in [1,2)^2$,
we define $\mathcal {D}^{\vec {\alpha},\vec {r}}$ to be the dilated and translated product dyadic grid in $\R^2$. This means that
$Q=Q_1\times Q_2\in \mathcal {D}^{\vec {\alpha},\vec {r}}$ if $Q_1\in r_1\mathcal {D}^{\alpha^1}$ and $Q_2\in r_2\mathcal {D}^{\alpha^2}$.

\vskip .2cm

Following  \cite{pipher}, \cite{treil}, we have the following relations between the strong notions of bounded mean oscillation and their dyadic versions.
\begin{multline*}
\BMO(\R^2)=\bigcap_{\vec {\alpha}\in \{0,1\}^{\mathbb {Z}}\times \{0,1\}^{\mathbb {Z}}, \vec {r}\in [1,2)^2 }\BMO^{d,\vec {\alpha}, \vec {r}}(\R^2)   \\
   =  \bigcap_{\vec {\alpha}\in \{0,1\}^{\mathbb {Z}}\times \{0,1\}^{\mathbb {Z}} }\BMO^{d,\vec {\alpha}, \vec {r_0}}(\R^2)  \text{ for any } \vec{r_0} \in [0,1)^2 ,
\end{multline*}

\begin{multline*}
\bmo(\R^2)=\bigcap_{\vec {\alpha}\in \{0,1\}^{\mathbb {Z}}\times \{0,1\}^{\mathbb {Z}}, \vec {r}\in [1,2)^2 }\bmo^{d,\vec {\alpha}, \vec {r}}(\R^2)   \\
   =  \bigcap_{\vec {\alpha}\in \{0,1\}^{\mathbb {Z}}\times \{0,1\}^{\mathbb {Z}} }\bmo^{d,\vec {\alpha}, \vec {r_0}}(\R^2)  \text{ for any } \vec{r_0} \in [0,1)^2 ,
\end{multline*}
where $\BMO^{d,\vec {\alpha}, \vec {r}}(\R^2)$ and $\bmo^{d,\vec {\alpha}, \vec {r}}(\R^2)$ are the dyadic (with respect to the product dyadic grid $\mathcal {D}^{\vec {\alpha},\vec {r}}$) $\BMO(\R^2)$ and $\bmo(\R^2)$ respectively.
One also obtains that
$$
\BMO([0,1]^2)=\bigcap_{\vec {\alpha}\in \{0,1\}^{\mathbb {Z}}\times \{0,1\}^{\mathbb {Z}}, \vec {r}\in [1,2)^2 }\BMO^{d,\vec {\alpha}, \vec {r}}([0,1]^2)
$$
and
$$
\bmo([0,1]^2)=\bigcap_{\vec {\alpha}\in \{0,1\}^{\mathbb {Z}}\times \{0,1\}^{\mathbb {Z}}, \vec {r}\in [1,2)^2 }\bmo^{d,\vec {\alpha}, \vec {r}}([0,1]^2).
$$
For the purpose of our last fact,  we introduce the space $\mathcal {LMO}([0,1]^2)$.
\begin{definition}\label{LMOrestrictps}
Let $f\in L^2(\R^2)$. We say that $f\in \mathcal {LMO}([0,1]^2)$ if $\supp f\subseteq [0,1]^2$,
 and there exists a constant $C>0$ such that for any $\vec {\alpha}\in \{0,1\}^{\mathbb {Z}}\times \{0,1\}^{\mathbb {Z}}$, $\vec {r}\in [1,2)^2$, and
$\vj = (j_1, j_2) \in \N_0\times \N_0$,
$$
\|Q^{\vec {\alpha}, \vec {r}}_{\vj}f\|_{\BMO^{d,\vec {\alpha}, \vec {r}}([0,1]^2)}\le C\frac{1}{(j_1+1)(j_2+1)}.
$$
\end{definition}
Here, $Q^{\vec {\alpha}, \vec {r}}_{\vj}$ denotes the projection as in (\ref{eq:qdef}), but relative to the dyadic grid $\mathcal{D}^{\vec {\alpha},\vec {r}}$; that is $$Q^{\vec {\alpha}, \vec {r}}_{\vj}f(s,t)=\sum_{r_1|I|\le 2^{-j_1}, r_2|J|\le 2^{-j_2}}\langle f,h_I^{\alpha_1,r_1}h_J^{\alpha_2,r2}\rangle h_I^{\alpha_1,r_1}(s)h_J^{\alpha_2,r2}(t),$$ where $h_I^{\alpha_l,r_l}$ is the Haar wavelet adapted to $I\in r_l\mathcal {D}^{\alpha_l}$, $l=1,2$.

\vskip .2cm
Our last fact is the following pretty recent result by S. Pott and the author \cite{pottsehba1}.
\begin{theorem} \label{thm:hankelps} Let $\phi
  \in \mathcal {LMO}([0,1]^2)$. Then
 $$
   [H_1, [H_2, \phi]]: \BMO([0,1]^2) \rightarrow \BMO(\R^2),
$$
is bounded, and $\|[H_1, [H_2, \phi]]\|_{\BMO([0,1]^2) \to \BMO(\R^2)} \lesssim
\|\phi\|_{\mathcal {LMO}([0,1]^2)}$.
\end{theorem}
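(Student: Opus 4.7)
The plan is to reduce everything to dyadic estimates, expand the iterated commutator as a sum of two-parameter paraproducts with symbol $\phi$, and then apply a strengthening of the paraproduct results proved earlier.

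\textbf{Step 1 (dyadic reduction and representation).} Since $\BMO(\R^2)=\bigcap_{\vec{\alpha},\vec{r}}\BMO^{d,\vec{\alpha},\vec{r}}(\R^2)$, it suffices to control $\|[H_1,[H_2,\phi]]b\|_{\BMO^{d,\vec{\gamma},\vec{s}}(\R^2)}$ for each product dyadic grid $\mathcal{D}^{\vec{\gamma},\vec{s}}$. I would invoke Hyt\"onen's (or Petermichl's) dyadic representation theorem to write each $H_i$ as a constant multiple of the expectation, over translated/dilated dyadic grids, of cancellative dyadic shifts $S_i^{\alpha_i,r_i}$ acting in the $i$-th variable. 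Since the $\BMO^d$-norm is a supremum of duality pairings with $H^1_d$, it passes through an expectation via Jensen's inequality, so the problem reduces to the uniform-in-grids estimate
$$
\bigl\|[S_1^{\alpha_1,r_1},[S_2^{\alpha_2,r_2},\phi]]b\bigr\|_{\BMO^{d,\vec{\gamma},\vec{s}}}\lesssim \|\phi\|_{\mathcal{LMO}([0,1]^2)}\|b\|_{\BMO([0,1]^2)}.
$$

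\textbf{Step 2 (commutator-to-paraproduct expansion).} For each fixed dyadic shift $S_i$ I decompose $[S_i,\phi]$ via the one-parameter martingale product rule, writing $\phi f=\Pi^{(0)}_{\phi}f+\Pi^{(1)}_{\phi}f+\Delta_{\phi}f$ in the $i$-th variable. Iterating in both variables and exploiting the usual telescoping cancellations of $[S_i,\cdot]$, the iterated commutator $[S_1,[S_2,\phi]]$ becomes a finite linear combination of operators of the form $T\circ\Pi^{\vec{\beta}}(\phi,\cdot)\circ T'$, where $\vec{\beta}\in\{0,1\}^2$ and $T,T'$ are dyadic shifts (or the identity) acting in the appropriate variables.

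\textbf{Step 3 (paraproduct bounds).} The analytic core is the claim that, when $\phi\in \mathcal{LMO}^d$, each paraproduct $\Pi^{\vec{\beta}}_\phi$ is bounded on $\BMO^d(\R^2)$ with $\|\Pi^{\vec{\beta}}_\phi\|_{\BMO^d\to\BMO^d}\lesssim \|\phi\|_{\mathcal{LMO}^d}$. This is a strengthening of Theorems~\ref{thm:main} and~\ref{thm:others}, made possible by the stronger product-decay $\|Q_{\vk}\phi\|_{\BMO^d}\lesssim \frac{1}{(k_1+1)(k_2+1)}$ enjoyed by $\mathcal{LMO}^d$ (as opposed to only $\frac{1}{k_1+k_2+2}$ for $\LMO^d$). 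I would run the same Cotlar-lemma-in-one-parameter scheme used in the proof of Theorem~\ref{thm:main}, but now iterate it in both parameters: the additional factor $\frac{1}{k_i+1}$ in each variable exactly absorbs the loss incurred by replacing the input space $\bmo^d$ with the larger space $\BMO^d$. Since dyadic shifts are bounded on $\BMO^d$, the compositions appearing in Step~2 inherit the bound, and the grid-averaging of Step~1 finishes the proof.

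\textbf{Main obstacle.} The principal difficulty is upgrading the paraproduct bounds from $\bmo^d\to \BMO^d$ to $\BMO^d\to \BMO^d$: this requires exploiting the stronger $\mathcal{LMO}^d$ decay in both parameters simultaneously, giving a doubly-iterated Cotlar argument with more delicate almost-orthogonality bookkeeping than in Section~3. A secondary but essential point is that the support hypothesis $\supp\phi\subseteq[0,1]^2$ has to be used to guarantee the $\mathcal{LMO}$ estimates remain uniform across all shifted and dilated dyadic grids produced by the representation theorem in Step~1 — without this uniformity the grid-averaging cannot be pushed through to yield a bound on $\BMO(\R^2)$ for the target.
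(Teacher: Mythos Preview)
The paper does not actually prove Theorem~\ref{thm:hankelps}; it is quoted from \cite{pottsehba1} as motivation. However, the paper's proof of the parallel Theorem~\ref{thm:hankel} is explicitly said to follow \cite{pottsehba1}'s argument for Theorem~\ref{thm:hankelps} verbatim, so I compare against that.

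Your overall architecture---dyadic representation of $H_i$, decomposition of the commutator into paraproduct-type pieces, then paraproduct bounds driven by the $\mathcal{LMO}$ decay---is exactly the strategy of \cite{pottsehba1} and of this paper. But there is a genuine gap in Step~1 and an oversimplification in Step~2.

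\textbf{Step~1: grid mixing does not work.} You fix a target grid $(\vec\gamma,\vec s)$ for the $\BMO^d$-norm and then try to bound $\|[S_1^{\alpha_1,r_1},[S_2^{\alpha_2,r_2},\phi]]b\|_{\BMO^{d,\vec\gamma,\vec s}}$ uniformly over the \emph{shift} grids $(\vec\alpha,\vec r)$. Your Jensen/duality step is fine, but the resulting mismatched-grid estimate is the problem: the paraproduct decomposition of $[S_1^{\alpha},[S_2^{\alpha},\phi]]$ lives in the $(\vec\alpha,\vec r)$-Haar system, and there is no embedding $\BMO^{d,\vec\alpha,\vec r}\hookrightarrow\BMO^{d,\vec\gamma,\vec s}$ for $\vec\gamma\neq\vec\alpha$. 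The paper avoids this entirely: it proves only the \emph{same-grid} estimate $\|[S^{\alpha^1,r_1},[S^{\alpha^2,r_2},\phi]]b\|_{\BMO^{d,\vec\alpha,\vec r}}\le C\|\phi\|_{\mathcal{LMO}}\|b\|_{\BMO}$ uniformly in $(\vec\alpha,\vec r)$, and then invokes Treil's averaging result \cite[Remark~0.5]{treil}, which says precisely that an average over grids of functions each bounded in their own $\BMO^{d,\vec\alpha,\vec r}$ lies in strong $\BMO(\R^2)$.

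\textbf{Step~2: nine terms, not four.} Iterating the three-term one-variable product rule gives nine operators, and five of them---$R_{R_\phi}$, $R_{\Delta_\phi}$, $\Delta_{R_\phi}$, $R_{\Pi_\phi}$, $\Pi_{R_\phi}$---involve the diagonal piece $R$ in at least one variable and are \emph{not} of the form $T\circ\Pi^{\vec\beta}_\phi\circ T'$ with dyadic shifts $T,T'$. For these, the commutators $[S^{(1)},[S^{(2)},\cdot]]$ must be computed explicitly (this is Lemma~\ref{lemma:pottsehba1} and the Appendix), after which they are rewritten as $\Delta$-type operators with modified symbols $\tilde\phi,\tilde b\in\BMO^d$; ``telescoping cancellations'' is not enough of a description here.

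Your Step~3 is on target: the main paraproduct $\Pi_\phi:\BMO^d\to\BMO^d$ does require the full product decay $\frac{1}{(k_1+1)(k_2+1)}$ of $\mathcal{LMO}^d$ (this is the main theorem of \cite{pottsehba1}), while $\Pi^{(1,0)}$, $\Pi^{(0,1)}$, $\Delta$ need only $\LMO_1^d$, $\LMO_2^d$, $\BMO^d$ respectively (Lemma~\ref{lemma:pottsehbaotherpara}).
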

We aim to replace in the last theorem, the space $\BMO([0,1]^2)$ by $\bmo([0,1]^2)$, keeping $\BMO(\R^2)$ as the target space. For this we will need to introduce the right concept of logarithmic mean oscillation here.
\begin{definition}\label{LMOrestrict}
Let $f\in L^2(\R^2)$. We say that $f\in \LMO([0,1]^2)$ if $\supp f\subseteq [0,1]^2$,
 and there exists a constant $C>0$ such that for any $\vec {\alpha}\in \{0,1\}^{\mathbb {Z}}\times \{0,1\}^{\mathbb {Z}}$, $\vec {r}\in [1,2)^2$, and
$\vj = (j_1, j_2) \in \N_0\times \N_0$,
$$
\|Q^{\vec {\alpha}, \vec {r}}_{\vj}f\|_{\BMO^{d,\vec {\alpha}, \vec {r}}([0,1]^2)}\le C\frac{1}{(j_1+j_2+2)}.
$$
\end{definition}

We do the following observations. First $\LMO([0,1]^2)$ continuously embeds into $\BMO([0,1]^2)$. Secondly,
 if we denote by $\LMO^{d,\vec {\alpha}, \vec {r}}([0,1]^2)$ the subset of
$\BMO^{d,\vec {\alpha}, \vec {r}}([0,1]^2)$ of functions $f$ such that there exists $C>0$ with
$$
    \|Q_{\vj}^{\vec {\alpha}, \vec {r}}f\|_{\BMO^{d,\vec {\alpha}, \vec {r}}([0,1]^2)}\le C\frac{1}{(j_1+j_2+2)}   \text{ for any }   \vj\in \N_0\times \N_0,
$$
then
 $$
 \LMO([0,1]^2)=\bigcap_{\vec {\alpha}\in \{0,1\}^{\mathbb {Z}}\times \{0,1\}^{\mathbb {Z}}, \vec {r}\in [1,2)^2 }\LMO^{d,\vec {\alpha}, \vec {r}}([0,1]^2).
 $$
$\LMO_1([0,1]^2)$ and $\LMO_2([0,1]^2)$ along with their dyadic counterparts are defined analogously. We are ready to give our main result of this section.
\begin{theorem} \label{thm:hankel} Let $\phi
  \in \LMO([0,1]^2)$. Then
 $$
   [H_1, [H_2, \phi]]: \bmo([0,1]^2) \rightarrow \BMO(\R^2),
$$
is bounded, and $\|[H_1, [H_2, \phi]]\|_{\bmo([0,1]^2) \to \BMO(\R^2)} \lesssim
\|\phi\|_{\LMO([0,1]^2)}$.
\end{theorem}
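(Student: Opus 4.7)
The plan is to reduce everything to the dyadic paraproduct bounds that we already have at our disposal, namely Theorem \ref{thm:main} and Theorem \ref{thm:others}, following the same scheme as the proof of Theorem \ref{thm:hankelps} in \cite{pottsehba1} but using the weaker dyadic $\LMO^d$ control tied to $\bmo^d \to \BMO^d$ mapping properties.

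First I would invoke Hyt\"onen's representation theorem (one-parameter, in each variable separately) to write $H_1$ and $H_2$ as averages, over random dyadic grids $\mathcal{D}^{\alpha^l,r_l}$, of dyadic shift operators $\sha^{\alpha^l,r_l}_i$ of bounded complexity. Taking tensor products and averaging, the iterated commutator can be represented as
\[
 [H_1,[H_2,\phi]] = \mathbb{E}_{\vec{\alpha},\vec{r}}\, [\sha^{\vec{\alpha},\vec{r}}_1, [\sha^{\vec{\alpha},\vec{r}}_2, M_\phi]],
\]
where $M_\phi$ denotes multiplication by $\phi$ and the shifts act in the first and second variables respectively. By the intersection characterisation of $\BMO(\R^2)$ recalled in Section 5, to bound the left-hand side into $\BMO(\R^2)$ it suffices to control, uniformly in $(\vec{\alpha},\vec{r})$, the dyadic $\BMO^{d,\vec{\alpha},\vec{r}}$-norm of each $[\sha^{\vec{\alpha},\vec{r}}_1, [\sha^{\vec{\alpha},\vec{r}}_2, M_\phi]]f$ for $f \in \bmo^{d,\vec{\alpha},\vec{r}}([0,1]^2)$.

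Next I would expand each iterated commutator of dyadic shifts with $M_\phi$ as a finite linear combination of two-parameter paraproducts $\Pi^{\vec{\beta}}(\phi,\cdot)$ with $\vec{\beta}\in\{0,1\}^2$, plus controlled error terms of the same paraproduct type (this is the standard expansion used for iterated Hankel/commutator problems, see \cite{ferglac,pottsehba1}). The output of this step is a representation
\[
 [\sha_1,[\sha_2,M_\phi]]f \;=\; \sum_{\vec{\beta}\in\{0,1\}^2} \Pi^{\vec{\beta}}(\phi,f) \;+\; \text{(simpler shifted analogues)},
\]
where each of the shifted analogues is of the same general form \eqref{paraprodgene2} and therefore amenable to the same analysis. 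At this point the problem splits into four cases indexed by $\vec{\beta}$. For $\vec{\beta}=\vec{0}$ we invoke Theorem \ref{thm:main}, which says $\Pi_\phi : \bmo^d \to \BMO^d$ is bounded with norm $\lesssim\|\phi\|_{\LMO^d}$. For $\vec{\beta}=\vec{1}$ we invoke part (1) of Theorem \ref{thm:others}, noting that $\LMO([0,1]^2) \hookrightarrow \BMO([0,1]^2)$ and that $\bmo^d \hookrightarrow \BMO^d$, so $\Delta_\phi : \bmo^d \to \BMO^d$ is bounded with the required norm. For the mixed $\vec{\beta}\in\{(1,0),(0,1)\}$ we invoke part (2) of Theorem \ref{thm:others} together with the containment $\LMO([0,1]^2) \subseteq \LMO_{\vec{\beta}}([0,1]^2)$, which is immediate from Definition \ref{LMOrestrict} since $\frac{1}{j_1+j_2+2} \le \frac{1}{j_i+1}$.

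The final step is to pass from dyadic to continuous bounds. Since each of the bounds above is uniform in $(\vec{\alpha},\vec{r})$ and since
\[
\BMO(\R^2) = \bigcap_{\vec{\alpha},\vec{r}} \BMO^{d,\vec{\alpha},\vec{r}}(\R^2), \qquad \bmo([0,1]^2) = \bigcap_{\vec{\alpha},\vec{r}} \bmo^{d,\vec{\alpha},\vec{r}}([0,1]^2),
\]
and $\LMO([0,1]^2) = \bigcap_{\vec{\alpha},\vec{r}} \LMO^{d,\vec{\alpha},\vec{r}}([0,1]^2)$, averaging over the grids and using Fubini gives the claimed continuous estimate $\|[H_1,[H_2,\phi]]\|_{\bmo([0,1]^2)\to\BMO(\R^2)} \lesssim \|\phi\|_{\LMO([0,1]^2)}$.

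The main obstacle I anticipate is step two: the bookkeeping of the commutator expansion in a way that makes every surviving term land in a paraproduct class controlled by one of $\LMO^d$ or $\LMO_{\vec{\beta}}^d$ (and not in a class outside the scope of Section 4, such as one requiring mapping from $\bmo^d$ into itself). If any term is of the form $\Pi^{\vec{1}}(\phi,f) = \Delta_\phi f$ acting on $\bmo^d$ rather than $\BMO^d$, one must exploit the containment $\bmo^d \subset \BMO^d$ and the fact that $\LMO \subset \BMO$ to absorb it; this is harmless here because of the support constraint on $\phi$, which is exactly why one restricts the symbol to $[0,1]^2$.
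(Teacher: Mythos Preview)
Your overall strategy (represent $H_1,H_2$ as averages of dyadic shifts, bound the dyadic commutators uniformly in the grid, then average) matches the paper's. However, two steps are not handled correctly.

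\textbf{The commutator expansion is not a sum of paraproducts.} The decomposition of $M_\phi$ in the Haar basis produces nine operators, only four of which are the paraproducts $\Pi^{\vec\beta}$. The remaining five --- $R_{R_\phi}$, $\Pi_{R_\phi}$, $R_{\Pi_\phi}$, $\Delta_{R_\phi}$, $R_{\Delta_\phi}$ --- involve \emph{means} of $\phi$ (e.g.\ $m_J\phi_I$ or $m_{IJ}\phi$) rather than Haar coefficients, and are \emph{not} of the form \eqref{paraprodgene2}. Consequently ``error terms of the same paraproduct type'' is wrong, and Theorems \ref{thm:main} and \ref{thm:others} do not apply to them. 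In the paper this is dealt with via Lemma \ref{lemma:pottsehba1} (quoted from \cite{pottsehba1}): one computes the iterated commutators of the shifts with each $R$-type operator explicitly and shows they are bounded $\BMO^d([0,1]^2)\to\BMO^d(\R^2)$ under the conditions $\phi\in\BMO^d$ or $\phi\in\LMO_j^d$. Only after disposing of these five terms does the problem reduce to bounding the four paraproducts; and for three of those ($\Delta_\phi$, $\Pi^{(1,0)}_\phi$, $\Pi^{(0,1)}_\phi$) the paper again cites the already-known $\BMO^d\to\BMO^d$ bounds from \cite{pottsehba1} (Lemma \ref{lemma:pottsehbaotherpara}), so that the only genuinely new ingredient is the $\bmo^d\to\BMO^d$ bound for $\Pi_\phi$.

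\textbf{The torus theorems do not transfer automatically.} Theorems \ref{thm:main} and \ref{thm:others} are stated on $\T^N$; here you need the analogue on $\R^2$ for symbols supported in $[0,1]^2$. This is Theorem \ref{thm:pararestrict} in the paper, and its proof requires extra work: one splits $\Pi(\Pi(\phi,b),\cdot)$ according to $P_{(0,1)^2}$, $P_{(0,1)\times(0,1)^c}$, $P_{(0,1)^c\times(0,1)}$, $P_{(0,1)^c\times(0,1)^c}$, and uses the local growth estimate \eqref{eq:avergloc} (which is trivial for intervals of length $\ge 1$) to dispose of the large-scale pieces. Simply citing the $\T^N$ results leaves the large-scale dyadic intervals of $\R$ unaccounted for.
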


Before proving Theorem \ref{thm:hankel}, let us start by considering its dyadic counterpart. We introduce the dyadic shift operators $S^{d, {\alpha},  {r}}$, $\alpha\in \{0,1\}^{\mathbb Z}$, $r\in [1,2)$. These are the bounded linear operators $S^{d, {\alpha},  {r}}: L^2( \R) \rightarrow
L^2(\R)$  defined by $S^{d,{\alpha}, \vec {r}} h_I = h_{I^+} - h_{I^-}$, $I \in \D^{ {\alpha},  {r}}$. For simplicity, we restrict to the standard dyadic grid and write $S^{(1)} = S^d \otimes \eins$, $S^{(2)} = \eins \otimes S^d$, as operators on $L^2(\R^2) = L^2(\R) \otimes L^2(\R)$. The corresponding dyadic version of the above theorem is the following.

\begin{theorem} \label{thm:dyshift} Let $\phi \in \LMO^d([0,1]^2)$. Then
$$
   [S^{(1)}, [S^{(2)},\phi]]: \bmo^d([0,1]^2) \rightarrow \BMO^d(\R^2)
$$
is bounded, and $\|[S^{(1)}, [S^{(2)},\phi]]\|_{\bmo^d([0,1]^2) \rightarrow
  \BMO^d(\R^2)} \lesssim \|\phi\|_{\LMO^d([0,1]^2)}$.
\end{theorem}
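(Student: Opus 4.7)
The strategy is to reduce Theorem \ref{thm:dyshift} to the paraproduct estimates of Sections 3 and 4 by decomposing the iterated commutator into pieces each of which is a composition of a dyadic shift--type operator with one of the paraproducts $\Pi^{\vec{\beta}}_\phi$. First I would expand the pointwise product $\phi \cdot f$ via the two--parameter Haar basis. Using the one--parameter identity $h_I h_J = |I|^{-1/2} h_J$ (if $J \subsetneq I$), the analogue for $I \supsetneq J$, and $h_I^2 = \chi_I / |I|$, the product $M_\phi f$ decomposes into a finite sum of two--parameter bilinear paraproducts $B_{\vec{\varepsilon}, \vec{\delta}, \vec{\beta}}(\phi, f)$. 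The contributing types are exactly the main paraproduct $\Pi = \Pi^{\vec{0}}$, the adjoint $\Delta = \Pi^{\vec{1}}$, the mixed paraproducts $\Pi^{\vec{\beta}}$ with $\vec{\beta} \in \{(0,1),(1,0)\}$, and the same list with the roles of $\phi$ and $f$ exchanged. By bilinearity,
$$[S^{(1)}, [S^{(2)}, M_\phi]] = \sum_{\text{types}} \, [S^{(1)}, [S^{(2)}, P_\phi]],$$
and it suffices to bound each summand from $\bmo^d([0,1]^2)$ into $\BMO^d(\R^2)$.

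Second, I would exploit the tensor--product structure $S^{(1)} = S \otimes \eins$, $S^{(2)} = \eins \otimes S$ to evaluate each inner bracket variable by variable. A direct computation on a single Haar function shows that $[S, M_{h_I}]$ and $[S, M_{\chi_I/|I|}]$ have explicit, diagonal--in--scale kernels. Because each $\Pi^{\vec{\beta}}_\phi$ has a factored Haar expansion, these local commutator identities yield, after straightforward bookkeeping, that $[S^{(1)}, [S^{(2)}, \Pi^{\vec{\beta}}_\phi]]$ is a finite linear combination of operators of the form $T_1 \circ \Pi^{\vec{\beta}'}_{\tilde{\phi}} \circ T_2$, where $T_1, T_2$ are bounded dyadic shift--type operators on $L^2$ and on $\BMO^d$, $\vec{\beta}'\in \{0,1\}^2$, and the symbol $\tilde{\phi}$ equals $\phi$ up to a harmless relabelling of Haar coefficients at a single scale, so that $\|\tilde{\phi}\|_{\LMO^d} \lesssim \|\phi\|_{\LMO^d}$.

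Third, I would apply the paraproduct boundedness results. For $\vec{\beta}' = \vec{0}$, Theorem \ref{thm:main} gives $\Pi_{\tilde{\phi}} : \bmo^d \to \BMO^d$, using $\phi \in \LMO^d$. For $\vec{\beta}' \notin \{\vec{0}, \vec{1}\}$, Theorem \ref{thm:others}(2) gives $\Pi^{\vec{\beta}'}_{\tilde{\phi}}: \bmo^d \to \BMO^d$, combined with the embedding $\LMO^d([0,1]^2) \hookrightarrow \LMO^d_{\vec{\beta}'}([0,1]^2)$. For $\vec{\beta}' = \vec{1}$, Theorem \ref{thm:others}(1) gives $\Delta_{\tilde{\phi}}: \BMO^d \to \BMO^d$, which we precompose with the continuous embedding $\bmo^d \hookrightarrow \BMO^d$, using $\LMO^d \subset \BMO^d$. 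Boundedness of $S^{(i)}$ on $L^2$ and on $\BMO^d$ closes the estimate on each summand, and summing yields $\|[S^{(1)}, [S^{(2)}, \phi]]\|_{\bmo^d([0,1]^2)\to \BMO^d(\R^2)} \lesssim \|\phi\|_{\LMO^d([0,1]^2)}$. The target is taken in $\R^2$ because the shifts $S^{(i)}$ may displace the support of a function in $[0,1]^2$ to a bounded enlargement of this set.

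The main obstacle is the combinatorial bookkeeping in the second step: the two nested commutators produce many terms, and for each one must verify that the resulting paraproduct symbol is $\phi$ (in possibly slightly relabelled form) rather than a symbol depending on the $\bmo^d$--input $f$, which would preclude the application of the theorems of Sections 3--4. This is ensured by the fact that $S$ commutes with the projections onto Haar coefficients at a fixed scale, so the $\Pi^{\vec{\beta}}$--structure (i.e.\ where $\phi$ and $f$ sit relative to the Haar expansion) is preserved by the nested commutator. A second, minor point is that the hypothesis $\phi \in \LMO^d$ must cover every $\LMO^d_{\vec{\beta}}$--requirement that appears, which is exactly the content of the identity $\LMO^d = \bigcap_{j} \LMO^d_j$ established in Section~2.
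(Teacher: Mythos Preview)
Your overall architecture is right---decompose $M_\phi$ into bilinear Haar pieces and bound the iterated commutator of each---but the decomposition you write down is incomplete, and this is a genuine gap. In two parameters the product $\phi\, b$ splits into \emph{nine} pieces, one for each choice in $\{I'\subsetneq I,\ I'=I,\ I'\supsetneq I\}\times\{J'\subsetneq J,\ J'=J,\ J'\supsetneq J\}$. Your list ``the four $\Pi^{\vec\beta}$ plus the same four with the roles of $\phi$ and $f$ exchanged'' produces only seven distinct operators (since $\Delta$ is symmetric in its arguments): in the paper's notation it covers $\Pi_\phi,\Delta_\phi,\Pi^{(0,1)}_\phi,\Pi^{(1,0)}_\phi$ and $R_{R_\phi},R_{\Delta_\phi},\Delta_{R_\phi}$, but it \emph{misses} the two genuinely mixed terms
\[
\Pi_{R_\phi} b=\sum_{I,J} m_J(\phi_I)\,m_I(b_J)\,h_I(s) h_J(t),\qquad
R_{\Pi_\phi} b=\sum_{I,J} m_I(\phi_J)\,m_J(b_I)\,h_I(s) h_J(t),
\]
in which $\phi$ sits in the paraproduct slot in one variable and in the averaging slot in the other. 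These cannot be realised as $\Pi^{\vec\beta}(\phi,\cdot)$ nor as $\Pi^{\vec\beta}(\cdot,\phi)$ for any $\vec\beta$, because in each of those forms one of the two functions appears through its \emph{full} Haar coefficient $\langle\,\cdot\,,h_I\otimes h_J\rangle$, whereas in $\Pi_R$ and $R_\Pi$ both $\phi$ and $b$ appear through a Haar coefficient in one variable and a mean in the other. Consequently neither Theorem~\ref{thm:main} nor Theorem~\ref{thm:others} applies to them. In the paper these two terms (together with the other three $R$-type operators) are handled not by paraproduct boundedness but by an explicit computation of the iterated commutator (Lemma~\ref{lemma:pottsehba1}, imported from \cite{pottsehba1}): one shows that $[S^{(1)},[S^{(2)},\cdot]]$ applied to an $R$-type piece collapses to a $\Delta$-type operator with a modified symbol $\tilde\phi$ whose $\BMO^d$ or $\LMO_j^d$ norm is controlled by that of $\phi$. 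Your second step asserts such a reduction in general, but for the role-swapped pieces the symbol \emph{before} taking commutators is $b$, not $\phi$; the sentence ``$S$ commutes with projections at a fixed scale, so the $\Pi^{\vec\beta}$-structure is preserved'' does not substitute for that computation.

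A secondary point: you invoke Theorems~\ref{thm:main} and~\ref{thm:others}, which are proved on $\T^N$. The present statement lives on $[0,1]^2\subset\R^2$ with target $\BMO^d(\R^2)$, and the paper does not simply quote the torus results: it proves the $\R^2$ analogue for the main paraproduct separately (Theorem~\ref{thm:pararestrict}), splitting according to whether each side of the dyadic rectangle has length $\le 1$ or $>1$ and using the local growth estimate $|m_{I\times J}b|\lesssim s(I)\|b\|_{\bmo}$. This passage is not difficult, but it is not automatic from the torus case and your proposal does not account for it.
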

\begin{proof}
Let us follow the ideas of \cite{pottsehba1,benoit}. First, we decompose the multiplication operator by $\phi$ into
nine parts: ${\Pi}_\phi$, ${\Delta}_\phi$,
${\Pi^{(0,1)}}_\phi$, ${\Pi^{(1,0)}}_\phi$, ${R_\Delta}_\phi$,
${R_\Pi}_\phi$, ${\Delta_R}_\phi$, ${\Pi_R}_\phi$, ${R_R}_\phi$,
corresponding to the matrix elements $\langle M_\phi h_I(s)
h_J(t), h_{I'}(s) h_{J'}(t) \rangle$ for $I' \subset I$, $I' = I$,
$I' \subset I$, $I' \supset I$, $J' \subset J$, $J' = J$, $J'
\supset J$.

Let us point out the symmetric pairs $({R_\Delta},{\Delta_R})$, $({R_\Pi},{\Pi_R})$, where
$$R_{R_\phi}b(s,t)=\sum_{I,J}b_{IJ}m_{IJ}(\phi)h_I(s)h_J(t),$$
$$\Pi_{R_\phi}b(s,t)=\sum_{I,J}m_J(\phi_I)m_I(b_J)h_I(s)h_J(t)$$ and
       $$\Delta_{R_\phi}b(s,t)=\sum_{I,J}m_J(\phi_I) b_{I,J} h_I(s)h_J^2(t).$$

The following was already proved in \cite{pottsehba1}.
\begin{lemma}\label{lemma:pottsehba1}
Let $\phi\in L^2([0,1]^2)$. Then the following estimates hold.
\begin{equation}
    \|[S^{(1)}, [S^{(2)},{R_R}_\phi]]\|_{\BMO^d([0,1]^2) \rightarrow \BMO^d(\R^2)} \le 2 \|\phi\|_{\BMO^d([0,1]^2)},
\end{equation}
\begin{equation}
  \|[S^{(1)}, [S^{(2)},{\Delta_R}_\phi]]\|_{\BMO^d([0,1]^2) \to \BMO^d(\R^2)} \lesssim \|\phi\|_{\BMO^d([0,1]^2)},
\end{equation}
\begin{equation}
  \|[S^{(1)}, [S^{(2)},{\Pi_R}_\phi]]\|_{\BMO^d([0,1]^2) \to \BMO^d(\R^2)} \lesssim \|\phi\|_{\LMO_{1}^d([0,1]^2)}.
\end{equation}
Swapping variables yields
\begin{equation}
  \|[S^{(1)}, [S^{(2)},{R_\Delta}_\phi]]\|_{\BMO^d([0,1]^2) \to \BMO^d(\R^2)} \lesssim \|\phi\|_{\BMO^d([0,1]^2)}
\end{equation}
and
\begin{equation}
  \|[S^{(1)}, [S^{(2)},R_{\Pi_\phi}]]\|_{\BMO^d([0,1]^2) \to \BMO^d(\R^2)} \lesssim \|\phi\|_{\LMO_{2}^d([0,1]^2)}.
\end{equation}
\end{lemma}

It comes that as $S^{(1)}$ and $S^{(2)}$ are bounded on
$\BMO^d(\R^2)$, it only remains to prove that for $\phi\in \LMO^d([0,1]^2)$, the operators ${\Pi}_\phi$, ${\Delta}_\phi$,
${\Pi^{(0,1)}}_\phi$, ${\Pi^{(1,0)}}_\phi$ are bounded from $\bmo^d([0,1]^2)$ to $\BMO^d(\R^2)$.

Again, we already have from \cite{pottsehba1} that
\begin{lemma}\label{lemma:pottsehbaotherpara}
Let $\phi \in L^2([0,1]^2)$. Then

\begin{enumerate}
\item $\Pi^{(1,1)}_\phi = \Delta_\phi: \BMO^d([0,1]^2) \rightarrow \BMO^d(\R^2)$ is bounded, if and only if $ \phi \in \BMO^d([0,1]^2)$.
Moreover, $$\|\Delta_\phi\|_{\BMO^d([0,1]^2) \rightarrow \BMO^d(\R^2)} \approx\|\phi\|_{\BMO^d([0,1]^2)}.$$
\item
$\Pi^{(1,0)}_\phi:\BMO^d([0,1]^2) \rightarrow \BMO^d(\R^2)$ is bounded if $ \phi \in \LMO_{1}^d([0,1]^2)$.
Moreover, $$\|\Pi^{(1,0)}_\phi\|_{\BMO^d([0,1]^2) \rightarrow \BMO^d(\R^2)}
  \lesssim \|\phi\|_{\LMO_{1}^d([0,1]^2)}.$$
\item
$\Pi^{(0,1)}_\phi:\BMO^d([0,1]^2) \rightarrow \BMO^d(\R^2)$ is bounded if $ \phi \in \LMO_{2}^d([0,1]^2)$.
Moreover, $$\|\Pi^{(0,1)}_\phi\|_{\BMO^d([0,1]^2) \rightarrow \BMO^d(\R^2)}
  \lesssim \|\phi\|_{\LMO_{2}^d([0,1]^2)}.$$
\end{enumerate}

\end{lemma}
Hence to finish the proof, we prove the following.
\begin{theorem}\label{thm:pararestrict}
Let $\phi \in L^2([0,1]^2)$. Then
 $\Pi^{(0, 0)}_\phi = \Pi_\phi:\bmo^d([0,1]^2) \rightarrow \BMO^d(\R^2)$ is bounded if  $ \phi \in \LMO^d([0,1]^2)$.
Moreover, $$\|\Pi_\phi\|_{\bmo^d([0,1]^2) \rightarrow \BMO^d(\R^2)} \lesssim \|\phi\|_{\LMO^d([0,1]^2)}.$$
\end{theorem}
\begin{proof}
An important ingredient of the proof is a local version of the first assertion in Lemma \ref{lemma:avgrowth}. For a bounded (not necessarily dyadic) interval $I \subset \R$, we define $s(I)$ as follows
$$
         s(I) =  \left\{ \begin{matrix} {\log|I|^{-1} +1}  & \text{ for } & |I| \le 1 \\
                                      1   & \text{ for } & |I| >1. \end{matrix}   \right.
$$
 For any $b\in \bmo([0,1]^2)$ and each rectangle $R = I \times J \subset \R^2$, as $m_Jb$ is uniformly in $\BMO([0,1])$, we get from the one parameter estimate of the mean of a function of bounded mean oscillation (see \cite{pottsehba1}) the following estimate.
 \begin{equation}\label{eq:avergloc}
  |m_R b|=|m_I(m_Jb)| \lesssim s(I) \|m_Jb\|_{\BMO([0,1])}\le s(I) \|b\|_{\bmo([0,1]^2)}. 
  \end{equation}
 Recall that we are looking to prove that given $\phi \in \LMO^d([0,1]^2)$, $b\in \BMO^d([0,1]^2)$ and  $f\in L^2(\mathbb R^2)$, the function
 $\Pi\left(\Pi(\phi,b),f\right)$ belongs to $L^2(\mathbb R^2)$.

\vskip .2cm

 We recall that $\D(\R)$ is the standard system of dyadic intervals in $\R$. The Haar basis of $L^2(\R^2)$ is
  $(h_I \otimes h_J)_{I, J \in \D(\R)} =   (h_R)_{R \in \D(\R) \times \D(\R)} $. We have the following decomposition (see also \cite{pottsehba1})
 $$
      f = \sum_{j_1 = - \infty}^\infty   \sum_{j_2 = - \infty}^\infty  \Da_{\vj} f                                        ,
 $$
 with
\begin{eqnarray*}
        \Da_{\vj} f &=& \sum_{|I |= 2^{-j_1}, |J|= 2^{- j_2} }   h_I(s) h_J(t) \langle f, h_I \otimes h_J \rangle    \\
           &=& \sum_{ R \in \D_{j_1}(\R) \times \D_{j_2}(\R)}   h_R  \langle f, h_R \rangle
 \end{eqnarray*}
$j_1, j_2 \in \mathbb Z.$

\vskip .2cm

Let $T:=\Pi\left(\Pi(\phi,b), \cdot \right)$. Then $T$ decomposes as follows
\begin{equation}\label{eq:Tdecomposition}
T=P_{(0,1)^2}T     +P_{(0,1)\times (0,1)^c}T   +  P_{(0,1)^c\times (0,1)} T   +P_{(0,1)^c\times (0,1)^c}T   ,
\end{equation}
where
\begin{eqnarray*}
 P_{(0,1)\times (0,1)}&=&   \sum_{j_1 = 0}^{\infty}\sum_{j_2=0}^{\infty}      \Da_{\vj},  \\
  P_{(0,1)\times (0,1)^c}&=&   \sum_{j_1 =0}^{\infty}\sum_{j_2=-\infty}^{-1}      \Da_{\vj}, \\
   P_{(0,1)^c\times (0,1)}&=&   \sum_{j_1 = - \infty}^{-1}\sum_{j_2=0}^{\infty}      \Da_{\vj}, \\
 P_{(0,1)^c\times (0,1)^c}&=&   \sum_{j_1 = - \infty}^{-1}\sum_{j_2=-\infty}^{-1}      \Da_{\vj}
\end{eqnarray*}
(see \cite{pottsehba1}).

Let us prove that each of the terms in the right hand side of the identity (\ref{eq:Tdecomposition}) is bounded on $L^2(\R^2)$.

We start with the last term. We observe that as
$$
P_{(0,1)^c\times (0,1)^c}\Pi\left(\Pi(\phi,b), \cdot \right)=\Pi\left(\Pi(P_{(0,1)^c\times (0,1)^c}\phi,b), \cdot \right),
$$
we only have to prove that given $\phi\in \LMO^d([0,1]^2)$ and $b\in \bmo^d([0,1]^2)$, $P_{(0,1)^c\times (0,1)^c}\Pi(\phi,b)$ belongs to $\BMO^d(\mathbb R^2)$. Observing with (\ref{eq:avergloc}) that for $R=I\times J\in \mathcal R$ with $|I|,|J|\ge 1$, $|m_Rb|\lesssim \|b\|_{\bmo^d([0,1]^2)}$, one obtains directly that for any open set $\Omega\subset \mathbb R^2$,
$$\|P_\Omega\left(P_{(0,1)^c\times (0,1)^c}\Pi(\phi,b)\right)\|_{L^2(\mathbb {R}^2)}^2\le \|P_\Omega\phi\|_{L^2(\mathbb {R}^2)}^2\|b\|_{\bmo^d([0,1]^2)}^2,$$
which proves that this term is bounded on $L^2(\mathbb R^2)$.

For $f\in L^2(\R^2)$, that
$$
      P_{(0,1)^2}\Pi\left(\Pi(\phi,b),f\right)   =      \Pi\left(\Pi(P_{(0,1)^2}       \phi,b),f\right)
$$
is in  $ L^2(\mathbb R^2)$ is obtained exactly as in the proof of Theorem \ref{thm:main}, with the help of the growth estimate (\ref{eq:avergloc}).

The second and third terms are symmetric, hence we only prove the boundedness of the second one. For this, we observe that as
$$P_{(0,1)\times (0,1)^c}\Pi\left(\Pi(\phi,b), \cdot \right)=\Pi\left(\Pi( P_{(0,1)\times (0,1)^c}      \phi,b), \cdot \right),$$
it is enough to prove that $\Pi( P_{(0,1)\times (0,1)^c}      \phi,b)\in \BMO(\R^2)$. The estimate (\ref{eq:avergloc}) tells us that for $R=I\times J\in \mathcal R$ with $|J|>1$, $|m_Rb|\lesssim \|b\|_{\bmo^d([0,1]^2)}$. Hence, for any open set $\Omega\in \R^2$,
$$\|P_\Omega\left(P_{(0,1)\times (0,1)^c}\Pi(\phi,b)\right)\|_{L^2(\mathbb {R}^2)}^2\le \|P_\Omega\phi\|_{L^2(\mathbb {R}^2)}^2\|b\|_{\bmo^d([0,1]^2)}^2,$$
which proves that this term is also bounded on $L^2(\mathbb R^2)$. The proof is complete.

\end{proof}
\end{proof}
We finish this section with the proof of Theorem \ref{thm:hankel}.
\begin{proof}[Proof of Theorem \ref{thm:hankel}]
The proof follows exactly as in \cite{pottsehba1} for the case of $[H_1,[H_2,b]]:\BMO([0,1]^2)\rightarrow \BMO(\R^2)$; we give it here for completeness. We use the fact that the Hilbert transform can be represented as
  averages of dyadic shifts (see \cite{hyt, pet}). This allows us to write for $b \in \bmo([0,1]^2)$ and $\phi \in
\LMO([0,1]^2)$,

$$
[H_1, [H_2, \phi]] b=\frac{64}{\pi^2}\int_1^2\int_1^2\int_{\{0, 1\}^{\mathbb {Z}}}\int_{\{0, 1\}^{\mathbb {Z}}}[S^{\alpha^1, r_1},[S^{\alpha^2, r_2}, \phi]] \, b \,d\mu(\alpha^1)\frac{dr_1}{r_1}\,d\mu(\alpha^2)\frac{dr_2}{r_2}
$$
(see \cite{hyt}). Next, we recall that for $b
\in \bmo([0,1]^2)$ and $\phi \in \LMO([0,1]^2)$, we have $b \in
\bmo^{d,\vec {\alpha}, \vec {r}}([0,1]^2)$, and $\phi \in
\LMO^{d,\vec {\alpha}, \vec {r}}([0,1]^2)$ for each $\vec {\alpha}=(\alpha^1, \alpha^2)\in \{0, 1\}^{\mathbb {Z}}\times \{0, 1\}^{\mathbb {Z}}$ and $\vec {r}=(r_1, r_2)\in [1,2)^2$ with uniformly bounded norms. 
It follows from Theorem \ref{thm:dyshift} that there exists a constant $C>0$ such
that
$$
     \|[S^{\alpha^1, r_1},[S^{\alpha^2, r_2}, \phi]] b\|_{\BMO^{d,\vec {\alpha}, \vec {r}}(\mathbb {R}^2)}
     \le C \|b\|_{\bmo([0,1]^2)} \|\phi\|_{\LMO([0,1]^2)} \text{ for all } (\alpha^1, \alpha^2, r_1, r_2).
$$
Hence, using \cite[Remark 0.5]{treil}, we obtain that
$$\frac{64}{\pi^2}\int_1^2\int_1^2\int_{\{0, 1\}^{\mathbb {Z}}}\int_{\{0, 1\}^{\mathbb {Z}}}[S^{\alpha^1, r_1},[S^{\alpha^2, r_2}, \phi]] \, b \,d\mu(\alpha^1)\frac{dr_1}{r_1}\,d\mu(\alpha^2)\frac{dr_2}{r_2}
  \in \BMO(\R^2)
$$
with norm controlled by $\|b\|_{\bmo([0,1]^2)} \|\phi\|_{\LMO([0,1]^2)}$. The proof is complete.
\end{proof}

\emph{Acknowledgements}: The author would like to thank his friend Willibrod Tetchi and his family for their kind hospitality during his stay in Cameroon where this work was completed.
\bibliographystyle{plain}

\end{document}